\title{Double blocking sets of size $3q-1$ in $\PG(2,q)$}
\author{Bence Csajb\'ok\thanks{MTA--ELTE Geometric and Algebraic Combinatorics Research Group, ELTE E\"otv\"os Lor\'and University, Budapest, Hungary, Department of Geometry, 1117, P\'azm\'any P.~s\'et\'any 1/C; email: csajbokb@cs.elte.hu.} and Tam\'as H\'eger\thanks{MTA--ELTE Geometric and Algebraic Combinatorics Research Group, ELTE E\"otv\"os Lor\'and University, Budapest, Hungary, Department of Computer Science, 1117, P\'azm\'any P.~s\'et\'any 1/C; email: hetamas@cs.elte.hu. Both authors were supported by OTKA Grant no.~K 124950 and the J\'anos Bolyai Research Scholarship of the Hungarian Academy of Sciences.}}
\date{}
\newcommand{\cB}{{\mathcal B}}
\newcommand{\cT}{{\mathcal T}}
\newcommand{\cL}{{\mathcal L}}
\newcommand{\cS}{{\mathcal S}}
\newcommand{\F}{{\mathbb F}}
\newcommand{\cut}[1]{}
\newtheorem{theorem}{Theorem}[section]
\newtheorem{lemma}[theorem]{Lemma}
\newtheorem{corollary}[theorem]{Corollary}
\newtheorem{definition}[theorem]{Definition}
\newtheorem{proposition}[theorem]{Proposition}
\newtheorem{conjecture}[theorem]{Conjecture}
\DeclareMathOperator{\PG}{{PG}}
\DeclareMathOperator{\AG}{{AG}}
\DeclareMathOperator{\PGL}{{PGL}}
\begin{document}
\date{\today}
\maketitle

\begin{abstract}
The main purpose of this paper is to find double blocking sets in $\PG(2,q)$ of size less than $3q$, in particular when $q$ is prime.
To this end, we study double blocking sets in $\PG(2,q)$ of size $3q-1$ admitting at least two $(q-1)$-secants. We derive some structural properties of these and show that they cannot have three $(q-1)$-secants. This yields that one cannot remove six points from a triangle, a double blocking set of size $3q$, and add five new points so that the resulting set is also a double blocking set.
Furthermore, we give constructions of minimal double blocking sets of size $3q-1$ in $\PG(2,q)$ for $q=13$, $16$, $19$, $25$, $27$, $31$, $37$ and $43$. If $q>13$ is a prime, these are the first examples of double blocking sets of size less than $3q$. These results resolve two conjectures of Raymond Hill from 1984.
\end{abstract}

\bigskip
{\it AMS subject classification:} 51E21

\bigskip
{\it Keywords:} double blocking set, finite projective plane.

\section{Introduction}

A $t$-fold blocking set of $\PG(2,q)$ is a set of points that intersects every line in at least $t$ points, and it is called minimal if none of its proper subsets is a $t$-fold blocking set. Usually, $1$-fold and $2$-fold blocking sets are called blocking sets and double blocking sets; $t$-fold blocking sets with $t\geq2$ are also called multiple blocking sets. Blocking and multiple blocking sets of finite projective planes are widely studied objects. A trivial lower bound for the size of a $t$-fold blocking set is $t(q+1)$. For detailed lower bounds, we refer the reader to \cite{BallMBS, BLSSz, GSzW}.

If $q$ is a square, one can easily construct a $t$-fold blocking set of size $t(q+\sqrt{q}+1)$ in $\PG(2,q)$ using the well-known partition of the pointset of $\PG(2,q)$ into Baer subplanes. This construction is the smallest possible if $t$ is small enough as shown in \cite{BSSz}. Up to our knowledge, surprisingly few constructions are known for small multiple blocking sets if $q$ is not a square. If $q$ is not a prime, \cite{BHSz,DBHSzVdV} give general constructions of small double blocking sets (of size around $2(q+(q-1)/(r-1))$, where $r$ is the order of a proper subfield of $\F_q$) as the union of two disjoint blocking sets. No other general results are known when $t$ is a constant. (For particular results on double blocking sets, see \cite{DGMP} and \cite{PS} as cited in \cite[p52]{BLSSz}.) If $q$ is a prime, the situation is even worse. 

A trivial construction for a double blocking set is the union of the sides of a triangle (that is, three non-concurrent lines), which is of size $3q$. In \cite{BB}, it was shown that a double blocking set in $\PG(2,q)$, $q\leq 8$, must have at least $3q$ points, and the question whether smaller examples may exist for larger values of $q$, $q$ prime, was left wide open. The first and, so far, only smaller example is shown in \cite{BKW}, where a double blocking set of size $3q-1$ for $q=13$ was constructed.

For this paragraph, let $q\in\{13,16,19,25,27,31,37,43\}$. In Section \ref{constructions} of the present paper, we show minimal double blocking sets in $\PG(2,q)$ of size $3q-1$ found by computer search. The complements of these are maximal $(q^2-2q+2,q-1)$-arcs in $\PG(2,q)$, which can be used to construct linear codes of type $[q^2-2q+2,3,q^2-3q+3]_q$ (see \cite{Hirsch}). If $13\neq q$ is a prime, then these are the first examples of such objects. Let us remark that our construction for $q=13$ is different from that of \cite{BKW}. The double blocking sets we present admit two $(q-1)$-secants, and their existence disprove a cautiously stated `conjecture' of Raymond Hill \cite{Hill1984} (see later). In Section \ref{structure}, some general structural properties of such double blocking sets are derived.
 
Hill considered the following problem \cite[Problem 3.8, p377]{Hill1984}: is it possible to delete $x$ points from a triangle and add $x-1$ points so that the resulting set of $3q-1$ points is a double blocking set? He proved that this is not possible for $x\leq 5$, and conjectured that it is also impossible for $x=6$ \cite[p378]{Hill1984}. Easy combinatorial arguments show, as pointed out in \cite{Hill1984}, that there are two options: a double blocking set obtained in this way (a) either contains a full line, or (b) the sides of the triangle become $(q-1)$-secants. We verify this conjecture in Section \ref{Hillconj} and prove the following theorem.

\begin{theorem}\label{Hillconjthm}
In $\PG(2,q)$, there is no double blocking set of size $3q-1$ that can be obtained by removing six points of a triangle and adding five new points.
\end{theorem}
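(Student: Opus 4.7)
The plan is to handle Hill's two cases (a) and (b) separately, each with a short argument relying on a single auxiliary fact.

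Case (b) is immediate from the structural result of Section~\ref{structure}. If each of the three sides of the triangle becomes a $(q-1)$-secant of the resulting set $B$, then $B$ admits three $(q-1)$-secants, contradicting the statement proved there that a double blocking set of size $3q-1$ in $\PG(2,q)$ cannot have three $(q-1)$-secants. So case (b) is ruled out without any further computation.

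For case (a), I would observe that if $B$ contains a full line $\ell$, then $B\setminus\ell$ is a blocking set of the affine plane $\AG(2,q)=\PG(2,q)\setminus\ell$. Indeed, for any line $m\neq\ell$ one has $|m\cap B|\geq 2$ while $m\cap\ell$ is a single point, and that point lies in $\ell\subset B$; hence at least one point of $m\cap B$ lies off $\ell$, so $B\setminus\ell$ meets the affine line $m\setminus\ell$. The size of this affine blocking set is $|B|-|\ell|=(3q-1)-(q+1)=2q-2$, strictly below the classical Jamison (equivalently Brouwer--Schrijver) lower bound of $2q-1$ on the size of a blocking set in $\AG(2,q)$. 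This is a contradiction. Note that this argument does not depend on $\ell$ being a side of the triangle: any full line in $B$ will do, so no separate case analysis is needed to locate $\ell$.

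Combining the two cases yields the theorem. I do not anticipate a serious obstacle: once Section~\ref{structure} supplies the no-three-$(q-1)$-secants input for case (b), and Jamison's bound is invoked for case (a), the two short arguments cover everything, given Hill's (easy combinatorial) dichotomy already stated in the paragraph preceding the theorem. If anything, the only thing to double-check is that Hill's dichotomy is indeed exhaustive in the regime of interest, but this is essentially forced by the fact that only five new points are available, so any line with fewer than two triangle points must pick up help from $N$.
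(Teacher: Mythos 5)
Your case (a) is correct and is exactly the paper's argument: a full line $\ell\subset\cB$ would leave an affine blocking set $\cB\setminus\ell$ of size $2q-2$ in $\AG(2,q)$, contradicting the Jamison--Brouwer--Schrijver bound $2q-1$. The problem is case (b), where your proof is circular. Section~\ref{structure} does \emph{not} prove that a double blocking set of size $3q-1$ cannot have three $(q-1)$-secants; the only result there about three $(q-1)$-secants is Corollary~\ref{0mod3}, which merely shows that such a configuration forces $q\equiv 1\pmod 3$ (via Theorem~\ref{Hill} and the $5$-secant count of Corollary~\ref{Osecants}), leaving infinitely many $q$ untouched. The nonexistence of three $(q-1)$-secants is not an input to Theorem~\ref{Hillconjthm} but a restatement of it: for $q\geq 9$ the three long secants cannot be concurrent and pairwise meet in points of $\cB$, so they form a triangle from which exactly six points are missing and off which exactly five points of $\cB$ lie --- precisely the configuration the theorem forbids. (This is why the abstract lists ``they cannot have three $(q-1)$-secants'' as a \emph{consequence} of the paper's work, not as a prior structural fact.)

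What your proposal omits is therefore the entire substance of the paper's proof of case (b): after the dichotomy, one fixes the triangle with two holes per side and five midpoints, and then carries out a genuine case analysis. If no three holes are collinear, Lemma~\ref{lemma1} and Proposition~\ref{genholes} derive coordinates for the forced midpoints ($P_1=(1:1:1)$ etc.) and reach contradictions in the resulting polynomial conditions on the hole parameters $x,y,m$; if some triple of holes is collinear, Lemma~\ref{2collin} and Propositions~\ref{twoofL} and~\ref{case2} narrow the incidence pattern of the three midpoints off that line to a single configuration, which again collapses to $3(x+1)^2=0$ and is killed by Corollary~\ref{0mod3} together with $x\neq -1$. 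None of this tedious but essential work can be replaced by a citation to Section~\ref{structure}; as written, your case (b) assumes the theorem to prove the theorem.
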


Option (a) follows easily from the celebrated result on affine blocking sets due to Jamison and Brouwer--Schrijver. Our proof for option (b) is a somewhat laborious mixture of case analysis and tedious calculations. Hill proved the following theorem, which immediately yields that option (b) is not possible if $q\equiv 2 \pmod{3}$.

\begin{theorem}[{\cite[Theorem 3.10]{Hill1984}}]\label{Hill}
Suppose that $\cB$ is a double blocking set of size $3q-1$ with at least two $(q-1)$-secants in $\PG(2,q)$, $q>2$. Then $q\not\equiv 2\pmod{3}$.
\end{theorem}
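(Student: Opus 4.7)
The plan is to assume such a $\cB$ exists, pick two $(q-1)$-secants $\ell_1,\ell_2$ with $P:=\ell_1\cap\ell_2$, set $\cB':=\cB\setminus(\ell_1\cup\ell_2)$, and derive a mod-$3$ obstruction via projection from $P$. First I would show $P\in\cB$ whenever $q\geq 4$ (the theorem is vacuous for $q\leq 3$): otherwise the $q-1$ lines through $P$ distinct from $\ell_1,\ell_2$ meet $\cB$ only inside $\cB'$, so blocking gives $|\cB'|\geq 2(q-1)$ while $|\cB'|=q+1$, forcing $q\leq 3$. Hence $|\cB'|=q+2$. Writing $\{A,B\}=\ell_1\setminus\cB$ and $\{C,D\}=\ell_2\setminus\cB$, the same double-counting run through $A$ is now tight: the $q$ lines through $A$ other than $\ell_1$ cover $\cB\setminus\ell_1$ (of size exactly $2q$) with $\geq 2$ points each, so every such line is a $2$-secant. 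I would then read off that $AC$ and $AD$ each carry $2$ points of $\cB'$ while the remaining $q-2$ lines through $A$ each carry exactly one, and the same rigid pattern would hold through $B,C,D$.

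Next I would fix coordinates with $\ell_1\colon Y=0$, $\ell_2\colon X=0$, $P=[0\!:\!0\!:\!1]$. The subgroup of $\PGL(3,q)$ preserving $\ell_1,\ell_2$ and fixing $P$ is rich enough to normalize $A=[1\!:\!0\!:\!0]$, $B=[1\!:\!0\!:\!1]$, $C=[0\!:\!1\!:\!0]$, $D=[0\!:\!1\!:\!1]$. Every $Z\in\cB'$ is then $[1\!:\!y_Z\!:\!z_Z]$ with $y_Z\in\F_q^*$, and the four projections from $A,B,C,D$ read
\[
\alpha(Z)=\frac{z_Z}{y_Z},\quad \beta(Z)=\frac{z_Z-1}{y_Z},\quad \gamma(Z)=z_Z,\quad \delta(Z)=z_Z-y_Z.
\]
The rigid pattern above says each of these maps hits $0$ and $1$ twice and every other element of $\F_q$ once; summing and using $\sum_{s\in\F_q}s=0$ (valid for $q>2$) gives $\sum\alpha=\sum\beta=\sum\gamma=\sum\delta=1$. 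Subtracting $\alpha-\beta=1/y_Z$ and $\gamma-\delta=y_Z$ then yields the two central identities
\[
\sum_{Z\in\cB'}y_Z=0 \quad\text{and}\quad \sum_{Z\in\cB'}\frac{1}{y_Z}=0.
\]

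Finally I would project $\cB'$ from $P$: the line $PZ$ depends only on $y_Z$, giving a map $\pi\colon\cB'\to\F_q^*$. Blocking forces $|\pi^{-1}(y_0)|\geq 1$ for every $y_0$, so with $e_{y_0}:=|\pi^{-1}(y_0)|-1\geq 0$ one has $\sum e_{y_0}=3$, and combining with the two identities above and with $\sum_{\F_q^*}y=\sum_{\F_q^*}1/y=0$ gives $\sum e_{y_0}y_0=0$ and $\sum e_{y_0}/y_0=0$. A short case analysis on the three possible excess patterns closes the argument: $(3)$ at one $y_1$ gives $3y_1=0$, requiring characteristic $3$; $(2,1)$ at distinct $y_1,y_2$ combines $y_2=-2y_1$ with $2/y_1+1/y_2=0$ and is impossible in every characteristic (characteristic $2$ forces $y_2=0$, characteristic $3$ collapses $y_1=y_2$, and otherwise $3=0$); $(1,1,1)$ at distinct $y_1,y_2,y_3$ forces the first two elementary symmetric functions to vanish, so $y_1,y_2,y_3$ are three distinct cube roots of $y_1y_2y_3\in\F_q^*$, which exist only when $3\mid q-1$. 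Since $q\equiv 2\pmod 3$ excludes both characteristic $3$ and $3\mid q-1$, every case fails. The main obstacle, to my mind, is the first paragraph: one has to squeeze equality out of an a priori slack blocking inequality to pin down the simultaneous $2$-secant pattern through all four of $A,B,C,D$; once this is in hand, the rest is clean symmetric-function accounting.
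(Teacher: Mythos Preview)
Your proof is correct and follows essentially the same route as the paper (which in turn reproduces Hill's argument): project $\cB'$ from the four holes to get $\sum y_Z=\sum 1/y_Z=0$, then project from $P$ and analyse the three excess slopes. The only cosmetic difference is that the paper, instead of splitting into the patterns $(3)$, $(2,1)$, $(1,1,1)$, writes the three excess slopes as $s,s\mu,s\mu^2$ and derives $\mu^2+\mu+1=0$ in one stroke, which encodes your case distinction uniformly.
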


On \cite[p380]{Hill1984} Hill remarks that ``The evidence for $q\leq 7$ suggests the conjecture that there does not exist a $(3q-1,\geq2)$-set with $r_{q-1}\geq 2$ for \underline{any} $q$ [that is, a double blocking set of size $3q-1$ having at least two $(q-1)$-secants]. The first cases for which such a set might exist are $q=9$ and $q=13$.'' The examples in Section \ref{constructions} refute this conjecture (the one in \cite{BKW} does not). Moreover, we propose the following

\begin{conjecture}
For all prime power $q\geq 13$, $q\not\equiv2 \pmod 3$, there exists a double blocking set in $\PG(2,q)$ of size $3q-1$ admitting two $(q-1)$-secants.
\end{conjecture}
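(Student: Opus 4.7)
The plan is to produce an explicit algebraic family of double blocking sets, mimicking the structure of the computer-generated examples in Section \ref{constructions} and guided by the structural results of Section \ref{structure}. Set coordinates so that the two $(q-1)$-secants are $\ell_1:Y=0$ and $\ell_2:X=0$, meeting at $O=[0:0:1]$. Writing $\cA=\cB\setminus(\ell_1\cup\ell_2)$, a direct count gives $|\cA|\in\{q+1,q+2\}$ depending on whether $O\in\cB$. The double-blocking condition then translates into: every line of $\PG(2,q)$ other than $\ell_1,\ell_2$ must meet $\cB$ in at least two points, which after subtracting the contribution of $\ell_1\cup\ell_2$ becomes a precise blocking condition on $\cA$ together with constraints on the projections of $\cA$ onto the two secants (which must cover exactly the right coordinates).

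The hypothesis $q\not\equiv2\pmod3$ is necessary by Theorem \ref{Hill}, and it is equivalent to $\F_q$ either containing a primitive cube root of unity or having characteristic $3$; in both cases there is a natural cubic structure on $\F_q$ to exploit. I would look for $\cA$ of the form $\{(x,f(x)):x\in S\}\cup T$, where $f$ is a low-degree rational function over $\F_q$ and $S,T$ are small correction sets used to tune the cardinalities of $\cB$ on the two secants. Natural candidates to test, suggested by the examples and by the forced cubic symmetry, include $f(x)=\alpha x^3+\beta$ and $f(x)=\alpha/x+\beta x$, together with their additive-polynomial analogues in characteristic $3$. The blocking condition for a generic line $aX+bY+c=0$ then reduces to bounding from below the number of solutions of $ax+bf(x)+c=0$ lying in $S$, which for such $f$ can be handled by elementary root-counting or by Hasse--Weil-type estimates.

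The main obstacle will be to ensure that \emph{every} line, including the $q+1$ lines through $O$ and any line meeting the graph of $f$ in a small number of points (tangents, inflexional tangents, lines passing through a singular point in the characteristic-$3$ case), is blocked with multiplicity at least $2$. The structural rigidity derived in Section \ref{structure} should considerably narrow the space of admissible $f$, $S$, $T$, and may even force the algebraic shape of $f$ once the joint distribution of $\cB$ on the two secants is fixed. A secondary difficulty is to bridge the small values of $q$ (already resolved by the computer-search constructions in Section \ref{constructions}) with the asymptotic range where counting estimates become effective: a uniformly valid construction must satisfy the blocking conditions \emph{exactly}, not merely for $q$ sufficiently large, so any residual exceptional $q$ arising from the analytic bounds would have to be eliminated either by hand or by a finite secondary construction.
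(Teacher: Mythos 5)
You have set out to prove a statement that the paper itself does not prove: this is Conjecture 1.3, which the authors explicitly leave open, supporting it only with the eight computer-found examples of Section \ref{constructions} ($q\in\{13,16,19,25,27,31,37,43\}$) and the structural constraints of Section \ref{structure}. So there is no proof in the paper to compare against, and your text is not a proof either --- it is a research programme. No explicit $f$, $S$, $T$ is ever produced, no verification is carried out for a single value of $q$, and you concede at the end that the construction must satisfy the blocking conditions exactly rather than asymptotically, which is precisely the point at which the plan stops being an argument.

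Beyond incompleteness, two concrete steps of the plan would fail as stated. First, your candidate functions conflict with the forced structure. From the proof of Proposition \ref{relations}: projecting $\cS'=\cB\setminus(L_X\cup L_Y\cup\ell_\infty)$ from $(0:1:0)$ and from $(1:0:0)$, the multisets of $x$-coordinates and of $y$-coordinates must each cover $\F_q^*$ exactly once, with exactly one value doubled. Hence $\cA$ must be, up to a bounded correction, the graph of a \emph{permutation} of $\F_q^*$. But $x\mapsto\alpha x^3+\beta$ is three-to-one on its image when $q\equiv1\pmod3$, and $x\mapsto\alpha/x+\beta x$ is generically two-to-one, so both violate the coverage condition on $y$-coordinates unless $S$ and $T$ absorb an unbounded amount of repair --- at which point the algebraic shape of $f$ carries no content. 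Second, the reduction to ``bounding from below the number of roots of $ax+bf(x)+c=0$'' misidentifies where the difficulty lies: a line meeting both axes in points of $\cB$ is already $2$-blocked regardless of $\cA$, so the only binding constraints are the pencils through the four holes and through $O$ --- exactly the conditions (A)--(E) of Lemma \ref{trisecants}. These are \emph{exact} combinatorial conditions (each slope or intercept value attained once, specified values twice), and no Hasse--Weil estimate can certify them: the intersection of a line with the graph of a degree-$3$ rational function is governed by a cubic in one variable, whose number of $\F_q$-roots ranges over $\{0,1,2,3\}$ line by line, and point-counting bounds on the curve give no per-line lower bound. Finally, the paper notes that no such set exists for $q=9$; any uniform algebraic family of the kind you propose would have to degenerate at $q=9$ while existing for all $q\geq13$ with $q\not\equiv2\pmod3$, and your plan offers no mechanism for that threshold. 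The statement remains open, and your proposal, while a reasonable direction, does not close it.
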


Let us note that there is no such double blocking set for $q=9$ (a computer search quickly shows this).

\noindent\textbf{Preliminaries and notation.} $\PG(2,q)$ and $\AG(2,q)$ denote the projective and affine planes over $\F_q$, the finite field of order $q$, respectively. The multiplicative group of $\F_q$ will be denoted by $\F_q^\times$, and $\F_q^*$ stands for the set of non-zero elements of $\F_q$. To represent the points and lines of $\PG(2,q)$, we shall use homogeneous triplets in round brackets for points, considered as coloumn vectors, and in square brackets for lines, considered as row vectors. 
Recall that the coordinates of points and lines are defined up to a scalar multiplier, and $(x:y:z)\in [a:b:c]$ if and only if $ax+by+cz=0$. Usually we consider $\PG(2,q)$ as the closure of $\AG(2,q)$, where the additional line $\ell_\infty$ is called the line at infinity; clearly, we may assume $\ell_\infty=[0:0:1]$. For the points of $\ell_\infty$, we will sometimes use the notation $(m):=(1:m:0)$ ($m\in\F_q$) and $(\infty):=(0:1:0)$. 
The terms $X$ axis and $Y$ axis refer to the lines $[0:1:0]$ and $[1:0:0]$, which will be denoted by $L_X$ and $L_Y$, respectively. The slope of $[a:b:c]$ is $\infty$ if $b=0$ and $-a/b$ otherwise. Note that the slope of the line joining $(0:0:1)$ and $(x:y:z)$, $x\neq0$, is $y/x$.
With respect to a given pointset $S$, a $t$-secant is a line intersecting $S$ in precisely $t$ points. In case of $t=0$, 1, 2 and 3, a $t$-secant is also called a skew, tangent, bisecant or trisecant line (to $S$), respectively. A line is blocked by $S$ if it is not skew to $S$. We will frequently use the well-known fact that $\PGL(2,q)$, the group of projectivities of $\PG(2,q)$, is sharply transitive on the quadruples of points in general position and, dually, on the quadruples of lines in general position as well. Recall that if a projectivity $\varphi$ of $\PG(2,q)$ is represented by $M\in\F_q^{3\times3}$ (in notation, $\varphi=\langle M\rangle$), and the triplets $v$, $w$ represent the coordinates of a point and a line of $\PG(2,q)$, then their images under $\varphi$ are represented by $Mv$ and $wM^{-1}$, respectively.

\section{Properties of double blocking sets in $\PG(2,q)$ of size $3q-1$ with two $(q-1)$-secants}
\label{structure}

In this section we consider double blocking sets $\cB$ in $\PG(2,q)$ of size $3q-1$ admitting two $(q-1)$-secants. Let us remark that, as straightforward combinatorial arguments show, if $q\geq 7$ then the two $(q-1)$-secants of $\cB$ must intersect in a point of $\cB$; furthermore, if $q\geq 9$ and there are three $(q-1)$-secants to $\cB$ then they cannot be concurrent. As mentioned in the introduction, there are no double blocking sets of size less than $3q$ in $\PG(2,q)$ for $q\leq 8$ \cite{BB}. Thus, without loss of generality, we may assume that two $(q-1)$-secants of $\cB$ meet in a point of $\cB$, and if there are three $(q-1)$-secants to $\cB$, then they are not concurrent. Finally, let us note that a double blocking set having a $q$-secant clearly contains at least $3q$ points (we look around from the point of the $q$-secant not in the blocking set).

First we give the proof of Theorem \ref{Hill} in order to gain more detailed information from it. This proof is essentially the same as which was published in \cite{Hill1984}. Note that it might be regarded as a Segre-type argument (cf.~\cite{Segre}), but addition is used instead of multiplication. We start by formulating a lemma, whose assertion is essentially proved in \cite[Theorem 3.10]{Hill1984} but in a slightly different setting; this formulation is a bit more informative and helps to derive not only Theorem \ref{Hill} but further corollaries as well.

\textbf{Notation.} Let $L_X=[0:1:0]$ and $L_Y=[1:0:0]$ denote the $X$ and $Y$ axes, and let $X_\infty=(1:0:0)$, $X_1=(1:0:1)$, $Y_\infty=(0:1:0)$ and $Y_1=(0:1:1)$.

Applying a suitable projectivity, any double blocking set containing two $(q-1)$-secants and their point of intersection can be moved into the position described in the following lemma.

\begin{lemma}[see also \cite{Hill1984}]\label{trisecants}
Suppose that $\cB$ is a double blocking set of size $3q-1$, where all points of $L_X$ and $L_Y$ are in $\cB$ except for the points $X_1$, $X_\infty$, $Y_1$ and $Y_\infty$. 
Let $\cT$ be the set of lines through the origin that are different from the axes and intersect $\cB$ in more than two points. Then there exists $\mu,s\in\F_q^*$ such that a line through the origin is in $\cT$ if and only if its slope is $s$, $s\mu$ or $s\mu^2$, where $\mu^2+\mu+1=0$.
\end{lemma}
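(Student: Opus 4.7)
My plan is to use a Rédei-polynomial argument at $O$ combined with tight double-blocking constraints at the non-$\cB$ points $X_1$, $Y_1$; this is Segre-type with addition replacing multiplication, as indicated in the remark preceding the lemma. First, I would write $\cA:=\cB\setminus(L_X\cup L_Y\cup\ell_\infty)$ and $P_\infty:=\cB\cap\ell_\infty$, so $|\cA|+|P_\infty|=q+2$. Counting $\cB$-points on the horizontal lines through $X_\infty$ (each doubly blocked, with $Y_1$ the only missing affine point of $L_Y\cap\cB$) forces $|\cA|\geq q$, and since $|P_\infty|\geq 2$ from $\ell_\infty$, I conclude $|\cA|=q$, $|P_\infty|=2$, and that each row $y=c$ and column $x=c$ with $c\in\F_q^*$ contains exactly one point of $\cA$ except row $y=1$ and column $x=1$, which each contain two. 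Write $P_\infty=\{(\mu_1),(\mu_2)\}$.

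\textbf{Rédei polynomial at $O$ and reformulation.} Next, I would set
\[
G(X):=\prod_{(a,b)\in\cA}(X-b/a)\prod_{i=1}^{2}(X-\mu_i),
\]
of degree $q+2$, whose multiplicity at $m\in\F_q^*$ equals $a_m:=|\ell_m\cap\cB|-1\geq 1$. Since $a_m\geq 1$ for every $m$, $X^{q-1}-1$ divides $G$, and writing $G(X)=(X^{q-1}-1)H(X)$ yields $H$ monic of degree $3$ with root-multiset exactly $\cT$ (each $m\in\cT$ appearing $a_m-1$ times). The lemma is equivalent to $H(X)=X^3-s^3$ for some $s\in\F_q^*$: the three roots $s,s\mu,s\mu^2$ (with $\mu^2+\mu+1=0$) lie in $\F_q$ by Theorem~\ref{Hill}, which also covers char~$3$ as the degenerate case $\mu=1$, collapsing $\cT$ to $\{s\}$ with $a_s=4$.

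\textbf{Power-sum identities from $X_1$ and $Y_1$.} Analogous Rédei polynomials $G_{X_1}$ and $G_{Y_1}$, built with respect to $\cB$-points off the $(q-1)$-secant and the bisecant through $X_1$ (resp.\ $Y_1$), have degree $2q-2$, and double-blocking together with the tight count $(2q-2)/(q-1)=2$ forces every $m\in\F_q^*$ to appear with multiplicity exactly $2$, so $G_{X_1}(X)=G_{Y_1}(X)=(X^{q-1}-1)^2$. All power sums of these polynomials of order $1\leq k\leq q-2$ vanish, yielding
\[
\sum_{\substack{(a,b)\in\cA\\a\neq1}}\Bigl(\tfrac{b}{a-1}\Bigr)^{k}+\mu_1^k+\mu_2^k=(-1)^k,\qquad \sum_{\substack{(a,b)\in\cA\\b\neq1}}\Bigl(\tfrac{b-1}{a}\Bigr)^{k}+\mu_1^k+\mu_2^k=(-1)^k.
\]
Together with the column sum $\sum_{(a,b)\in\cA}1/a^k=1$ (immediate from the row/column structure and $\sum_{c\in\F_q^*}c^{-k}=0$), the $k=1$ case of the $Y_1$ identity (via $(b-1)/a=b/a-1/a$) gives $p_1^O:=\sum_{(a,b)\in\cA}b/a+\mu_1+\mu_2=0$, which by Newton's identity kills the $X^2$-coefficient of $H$. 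For $k=2$, I would use the expansion $((b-1)/a)^2=(b/a)^2-2b/a^2+1/a^2$ together with the algebraic identity $(2a-1)/(a(a-1))^2=1/(a-1)^2-1/a^2$ to combine the $X_1$ and $Y_1$ identities and extract $p_2^O=0$, killing the $X$-coefficient of $H$.

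\textbf{Main obstacle.} The hard part is the quadratic step $p_2^O=0$: while $p_1^O=0$ falls out of the $Y_1$ identity by one subtraction, the $k=2$ case requires tracking cancellations between three related quadratic sums and carefully handling the exceptional row-$1$ and column-$1$ points. Characteristics $2$ and $3$ need separate treatment since Newton's identity $p_2=-2e_2$ degenerates; in char~$3$, however, the lemma's conclusion $\cT=\{s\}$ with $a_s=4$ is already recoverable directly from $p_1^O=0$ and the total excess $\sum_{m}(a_m-1)=3$.
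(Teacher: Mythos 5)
Your first half is sound and is essentially the paper's proof in R\'edei-polynomial clothing: the tight counts at $X_\infty$, $Y_\infty$, $\ell_\infty$ are the paper's observations (A), (C), (D); your $G_{X_1}=G_{Y_1}=(X^{q-1}-1)^2$ identities at $k=1$ are exactly the paper's (B) and (E); and your derivation of $p_1^O=0$, hence $e_1(H)=0$, is the paper's equation \eqref{AB}. But the second relation is where your plan genuinely breaks, and you have flagged the breaking point without resolving it. The $Y_1$ identity at $k=2$ reduces (using $\sum 1/a^2=1$) precisely to $p_2^O=2\sum_{(a,b)\in\cA}b/a^2$, and the $X_1$ identity at $k=2$ brings in $\sum b^2/(a-1)^2$; these introduce mixed sums such as $\sum b/a^2$ and $\sum b^2/a$ that are correlations between the row, column and slope coordinates, not symmetric functions of any of the multisets you control, so the system does not close and $p_2^O=0$ is not derivable this way. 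Moreover, even granting $p_2^O=0$, your route is dead in characteristic $2$: there $p_2=p_1^2=0$ holds automatically and carries no information, and $e_2$ cannot be recovered from $p_1,p_2$ -- note that the lemma must cover even $q$ (the paper's $q=16$ construction relies on it). Your characteristic-$3$ fallback is also false as stated: $p_1(E)=0$ with $|E|=3$ does not force a triple root; e.g.\ in $\F_9$ the excess multiset $\{1,\omega,-1-\omega\}$ with $\omega\notin\F_3$ has three distinct elements summing to zero. Finally, invoking Theorem \ref{Hill} to place the roots of $H$ in $\F_q$ is both circular in the paper's logic (Theorem \ref{Hill} is re-derived from this lemma) and unnecessary, since the roots of $H$ are slopes of actual lines through the origin and lie in $\F_q^*$ by construction.

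The missing ingredient is not a higher power sum but the \emph{reciprocal} first-degree data, and it is already available inside your own machinery: the multiset $\{b/(a-1)\}\cup\{\mu_1,\mu_2\}$ equals $\F_q^*$ with $-1$ doubled, so you may take its $k=-1$ power sum, $\sum_{a\neq1}(a-1)/b+1/\mu_1+1/\mu_2=-1$; expanding $(a-1)/b=a/b-1/b$, the corrections from the two column-$1$ points cancel against $\sum_\cA 1/b=1$, yielding $\sum_\cA a/b+1/\mu_1+1/\mu_2=0$. This says the sum of reciprocals of the roots of $H$ vanishes, whence $e_2(H)=e_3(H)\sum_r r^{-1}=0$ in \emph{every} characteristic; together with $e_1(H)=0$ this gives $H=X^3-e_3(H)$, and setting $\mu=r_2/r_1$ one gets $\mu^2+\mu+1=0$ uniformly, with no case split for characteristics $2$ or $3$. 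This is exactly the paper's argument: its (A)$-$(B) gives $\sum y_i/x_i=0$ (your $p_1^O=0$) and its (D)$-$(E) gives $\sum x_i/y_i=0$ (equation \eqref{DE}, the reciprocal identity you skipped), after which $s_i+s_j+s_k=0$ and $s_i^{-1}+s_j^{-1}+s_k^{-1}=0$ force $\mu^2+\mu+1=0$ directly. In short: replace your $k=2$ step by the coordinate-swapped $k=1$ step (equivalently $k=-1$), and the proof goes through; as proposed, it has an unfixed gap at the quadratic step and fails outright for even $q$ and for $q\equiv0\pmod 3$.
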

\begin{proof}
Let $\cS:=\cB \setminus (L_X \cup L_Y)=\{(x_i:y_i:z_i) \colon i=1,2,\ldots,q+2\}$. Note that $x_i\neq 0$ and $y_i\neq0$ for all $1\leq i\leq q+2$.

\begin{itemize}
\item[(A)] Looking at the points of $\cS$ from lines through $(0:1:0)$ it follows that the multiset $\{z_i/x_i \colon i=1,2,\ldots,q+2\}$ contains each
element of $\F_q$ once, except for 1 and 0, which are contained twice. In detail, the line joining $(0:1:0)$ and $(1:0:t)$ contains as many points of $\cS$ as the number of $i$s for which $\left|\begin{matrix}0&1&0\\1&0&t\\x_i&y_i&z_i\end{matrix}\right|=tx_i-z_i=0$ occurs, hence this number must be two if $t=0,1$ and one otherwise.

\item[(B)] Looking at the points of $\cS$ from lines through $(0:1:1)$ it follows that the multiset $\{(z_i-y_i)/x_i \colon i=1,2,\ldots,q+2\}$ contains each
element of $\F_q$ once, except for 1 and 0, which are contained twice. The reason is similar as above, $\left|\begin{matrix}0&1&1\\1&0&t\\x_i&y_i&z_i\end{matrix}\right|=tx_i+y_i-z_i=0$ must occur twice if $t=0,1$ and once otherwise.

\item[(C)] Looking at the points of $\cS$ from lines through $(0:0:1)$ it follows that the multiset $\{y_i/x_i \colon i=1,2,\ldots,q+2\}$ is contained in
$\F_q^*$ and it contains each element of $\F_q^*$ at least once. Clearly, there must be at least one point of $\cS$ on each line $[1:t:0]$, $t\neq 0$.

\item[(D)] Looking at the points of $\cS$ from lines through $(1:0:0)$ it follows that the multiset $\{z_i/y_i \colon i=1,2,\ldots,q+2\}$ contains each
element of $\F_q$ once, except for 1 and 0, which are contained twice. This follows from (A) by interchanging the first two coordinates.

\item[(E)] Looking at the points of $\cS$ from lines through $(1:0:1)$ it follows that the multiset $\{(z_i-x_i)/y_i \colon i=1,2,\ldots,q+2\}$ contains each
element of $\F_q$ once, except for 1 and 0, which are contained twice. This follows from (B) by interchanging the first two coordinates.
\end{itemize}

Since for $q\neq 2$ the sum of the elements of $\F_q$ is 0, the above observations yield several equalities.
From (A) we obtain $\sum_{i=1}^{q+2} z_i/x_i=1$, while (B) gives $\sum_{i=1}^{q+2} (z_i-y_i)/x_i=1$, and thus
\begin{equation}
\sum_{i=1}^{q+2} y_i/x_i=0.\label{AB}
\end{equation}
From (D) we obtain $\sum_{i=1}^{q+2} z_i/y_i=1$, while (E) gives $\sum_{i=1}^{q+2} (z_i-x_i)/y_i=1$, and thus
\begin{equation}
\sum_{i=1}^{q+2} x_i/y_i=0.\label{DE}
\end{equation}
Now we apply (C). Let $i,j,k\in \{1,2,\ldots, q+2\}$ such that $H:=\{x_\nu/y_\nu \colon \nu\notin \{i,j,k\}\}$ is the set of non-zero elements of $\F_q$. Note that if $\nu\in\{i,j,k\}$, then $s_\nu=x_\nu/y_\nu$ is the slope of a line through the origin which intersects $\cB$ in more than two points.
Clearly, $\sum_{h\in H} h = 0$ and $\sum_{h\in H} h^{-1}=0$, thus also $s_i+s_j+s_k=0$ by \eqref{AB} and $1/s_i+1/s_j+1/s_k=0$ by \eqref{DE}. 
Let $\mu=s_j/s_i$. Combining the last two equations we obtain $\mu^2+\mu+1=0$. As $s_i+s_j+s_k=0$, $1+s_j/s_i+s_k/s_i=0$ holds, whence $s_k/s_i=\mu^2$.
\end{proof}

Hill's Theorem \ref{Hill} follows immediately from the well-known fact that $\mu^2+\mu+1=0$ has a solution in $\F_q$ if and only if $q\not\equiv 2\pmod{3}$. Indeed, as $(\mu^3-1)=(\mu-1)(\mu^2+\mu+1)=0$, we have either $\mu=1$ and thus $3=0$, so $q\equiv0\pmod{3}$, or the order of $\mu$ in $\F_q^\times$ is three, whence $q \equiv 1 \pmod{3}$. However, the information gained on the `long secants' through the origin give valuable corollaries.

\begin{corollary}
\label{Osecants}
Let $\cB$ be a 2-fold blocking set of size $3q-1$ in $\PG(2,q)$, $q\neq 2$, such that $\cB$ has two $(q-1)$-secants, $\ell$ and $m$, $\ell\cap m\in\cB$. 
If $q\equiv 0 \pmod 3$, then through $\ell \cap m$ there pass two $(q-1)$-secants, $q-2$ bisecants and one 5-secant of $\cB$. 
If $q \equiv 1 \pmod 3$, then through $\ell \cap m$ there pass two $(q-1)$-secants, $q-4$ bisecants and three 3-secants of $\cB$. 
\end{corollary}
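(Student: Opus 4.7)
My plan is to apply Lemma \ref{trisecants} directly by first moving $\cB$ into the standard position via a projectivity. Since $\ell$ and $m$ are $(q-1)$-secants with $\ell\cap m\in\cB$, the two points of $\ell$ missing from $\cB$ (other than $\ell\cap m$) together with those of $m$ give four points off of $\ell\cap m$, and as $\PGL(2,q)$ acts transitively enough on such configurations, I can map $\ell\cap m$ to $(0:0:1)$, $\ell$ to $L_X$, $m$ to $L_Y$, and the four missing off-intersection points to $X_\infty, X_1, Y_\infty, Y_1$. After this reduction, the set $\cS=\cB\setminus(L_X\cup L_Y)$ has exactly $q+2$ points, and what I need is a line count through the origin: there are $q+1$ lines through the origin, two of which (the axes) are $(q-1)$-secants, and I must classify the remaining $q-1$.

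Next, for each slope $s\in\F_q^*$, let $N_s$ be the multiplicity of $s$ in the multiset $\{x_i/y_i : i=1,\dots,q+2\}$, i.e., the number of points of $\cS$ lying on the line through the origin of slope $s$. By observation (C) of the Lemma's proof, $N_s\geq 1$ for every $s\in\F_q^*$; since $\sum_s N_s=q+2$, the total excess $\sum_s(N_s-1)$ equals exactly $3$. A non-axis line through the origin of slope $s$ meets $\cB$ in $N_s+1$ points, so describing the line spectrum reduces to describing the distribution of the excess. Here I invoke the content of Lemma \ref{trisecants}: the slopes $s_i,s_j,s_k$ corresponding to the three extras satisfy $s_j/s_i=\mu$ and $s_k/s_i=\mu^2$ with $\mu^2+\mu+1=0$.

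The case split on $q\bmod 3$ is now immediate. If $q\equiv 0\pmod 3$, then $\mu^2+\mu+1=(\mu-1)^2$, forcing $\mu=1$ and hence $s_i=s_j=s_k$. Thus a single slope absorbs all three extras, giving one line through the origin with $N=4$ (a $5$-secant) and $q-2$ lines with $N=1$ (bisecants), which together with the two axes gives the claimed count. If $q\equiv 1\pmod 3$, then $\mu$ has multiplicative order $3$, so $s_i,s_j,s_k$ are three distinct slopes; the three extras are distributed one per slope, giving three lines with $N=2$ (trisecants) and $q-4$ lines with $N=1$ (bisecants).

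I do not anticipate a serious obstacle here: the structural content has already been established in Lemma \ref{trisecants}, and the corollary is essentially a bookkeeping exercise translating slope multiplicities into line-secant types. The only subtlety to verify carefully is that in characteristic $3$ one cannot have the excess distribution $(3,2,1,\dots,1)$ or $(2,2,2,1,\dots,1)$: any such distribution would force two of the $s_i,s_j,s_k$ to coincide while the third differs, giving $\mu=1$ but $s_k\neq s_i$, which contradicts $s_k/s_i=\mu^2=1$. This rules out all distributions other than the single one identified in each case, securing the corollary.
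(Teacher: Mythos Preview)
Your proof is correct and follows essentially the same approach as the paper's: reduce to the standard position of Lemma~\ref{trisecants}, then use the case split $\mu=1$ versus $\mu\neq 1$ to determine $|\cT|$. Your write-up is more explicit than the paper's two-line proof in that you spell out the excess count $\sum_s(N_s-1)=3$ and verify that no mixed distribution of the three extras can occur, which is exactly the bookkeeping the paper leaves to the reader.
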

\begin{proof}
We may assume that $\ell$ and $m$ are as in Lemma \ref{trisecants} and then apply the lemma. If $q\equiv 0\pmod 3$, then $\mu=1$ and $|\cT|=1$. If $q\equiv 1\pmod 3$, then $\mu\neq 1$ and thus $|\cT|=3$. This finishes the proof.
\end{proof}

\begin{corollary}
\label{0mod3}
Let $\cB$ be a 2-fold blocking set of size $3q-1$ in $\PG(2,q)$, $q\neq 2$, such that $\cB$ has three $(q-1)$-secants. Then $q\equiv 1 \pmod 3$. 
\end{corollary}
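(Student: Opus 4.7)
My plan is to apply Theorem \ref{Hill} to exclude $q \equiv 2 \pmod 3$ (which is legitimate since three $(q-1)$-secants imply at least two) and then derive a contradiction from the assumption $q \equiv 0 \pmod 3$ by an incidence count. The case $q \leq 8$ is vacuous, so I may assume $q \geq 9$. By the opening remarks of the section, the three $(q-1)$-secants $\ell_1, \ell_2, \ell_3$ pairwise meet in points of $\cB$ and are not concurrent; hence they form a proper triangle with vertices $P_{ij} := \ell_i \cap \ell_j \in \cB$. A quick count gives $|\cB \cap (\ell_1 \cup \ell_2 \cup \ell_3)| = 3(q-1) - 3 = 3q - 6$, so the set $\mathcal{O} := \cB \setminus (\ell_1 \cup \ell_2 \cup \ell_3)$ has exactly $5$ elements.

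Under the assumption $q \equiv 0 \pmod 3$, Corollary \ref{Osecants} applied to each pair of $(q-1)$-secants supplies a unique $5$-secant $m_{ij}$ through each vertex $P_{ij}$. The three lines $m_{12}, m_{13}, m_{23}$ are pairwise distinct, for if $m_{ij}$ passed through the third vertex $P_{ik}$ it would coincide with $\ell_i$, which is impossible since $\ell_i$ is a $(q-1)$-secant and $q-1 \neq 5$. Moreover, since $m_{ij}$ meets $\ell_i$ and $\ell_j$ only in $P_{ij}$ and meets $\ell_k$ in a single point (distinct from $P_{ij}$, by non-concurrence of the triangle), at most one of the four points of $\cB \setminus \{P_{ij}\}$ on $m_{ij}$ can lie on a side of the triangle; consequently $|m_{ij} \cap \mathcal{O}| \geq 3$.

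The last step is to count incidences between $\mathcal{O}$ and $\{m_{12}, m_{13}, m_{23}\}$. The lower bound is $9$ by the previous sentence. For the upper bound, I use the trivial fact that two distinct $5$-secants share exactly one point. If some $P \in \mathcal{O}$ lies on all three $5$-secants (so they are concurrent at $P$), every other outside point lies on at most one of them, giving at most $3 + 4 = 7$ incidences. Otherwise each outside point lies on at most two $5$-secants, and any outside point on exactly two is one of the three pairwise intersections of the $m_{ij}$; writing $x_i$ for the number of outside points on exactly $i$ of the $5$-secants, this gives $x_2 \leq 3$ and $x_0 + x_1 + x_2 = 5$, whence $x_1 + 2 x_2 \leq 5 + x_2 \leq 8$. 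Both bounds contradict the lower bound of $9$, completing the proof. I anticipate no real obstacle---the argument is entirely elementary once Corollary \ref{Osecants} is in hand.
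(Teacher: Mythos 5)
Your proposal is correct and follows essentially the same route as the paper: Theorem \ref{Hill} rules out $q\equiv2\pmod 3$, and for $q\equiv0\pmod3$ Corollary \ref{Osecants} yields a $5$-secant through each vertex of the triangle of $(q-1)$-secants, contradicting the fact that only $5$ points of $\cB$ lie outside the triangle. The paper compresses your final incidence count into the single assertion that these $5$-secants would force at least $6$ points outside the triangle; your explicit verification of the distinctness of the three $5$-secants and the bound of at most $8$ incidences is just a careful unpacking of that step.
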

\begin{proof}
Theorem \ref{Hill} excludes the case $q\equiv 2 \pmod 3$. Suppose now $q\equiv 0\pmod 3$. Let $\ell$, $m$ and $n$ be the three $(q-1)$-secants of $\cB$.
According to Corollary \ref{Osecants}, there is a 5-secant of $\cB$ at each of the verices of the triangle formed by $\ell$, $m$ and $n$. 
But then $\cB \setminus (\ell \cup m \cup n)$ has size at least 6, a contradiction since the size of this pointset is 5.
\end{proof}

\begin{definition}
Suppose that $\cB$ is a double blocking set of size $3q-1$, where all points of $L_X$ and $L_Y$ are in $\cB$ except for the points $X_1$, $X_\infty$, $Y_1$ and $Y_\infty$. Let $s$ denote the slope of a line through the origin, different from the axes, which intersects $\cB$ in more than two points. Then the \emph{parameter of $\cB$} is $s^3$.
\end{definition}

According to Lemma \ref{trisecants}, the parameter is well-defined both when $q\equiv 1\pmod 3$ or $q\equiv 0\pmod 3$.

Given a double blocking set $\cB$ in the setting of Lemma \ref{trisecants}, the projectivities of $\PG(2,q)$ that map $\cB$ to a double blocking set in the same setting are precisely those that permute the points $X_\infty, Y_\infty, X_1$ and $Y_1$ with the restriction that $\{X_1,X_\infty\}$ is either fixed setwise or is mapped to $\{Y_1,Y_\infty\}$. Let us denote the group of these projectivities by $G$. Then $G$ is isomorphic to the dihedral group $D_4$, and it is generated by the projectivities represented by
\[
F = 
\begin{pmatrix}
0 & 1 & 0 \\
-1 & 0 & 0 \\
0 & 1 & -1
\end{pmatrix}
\mbox{ and \;}
T = 
\begin{pmatrix}
0 & 1 & 0 \\
1 & 0 & 0 \\
0 & 0 & 1
\end{pmatrix},
\]
where $F$ maps $Y_\infty\to X_1\to Y_1 \to X_\infty\to Y_\infty$ ($F$ is an order four rotation in $D_4$), and $T$ is the reflection to the line $[1:-1:0]$. In this section, by rotations and reflections we will refer to the set of group elements $\{F^i\colon 1\leq i\leq 4\}$ and $\{TF^i\colon 1\leq i\leq 4\}$, respectively.

\begin{proposition}\label{relations}
Suppose that $\cB$ is a double blocking set of size $3q-1$ in $\PG(2,q)$, $q\neq2$, where all points of $L_X$ and $L_Y$ are in $\cB$ except for the points $X_1$, $X_\infty$, $Y_1$ and $Y_\infty$. Let $s^3$ be the parameter of $\cB$, and let 
$\cB\cap X_\infty Y_\infty=\{(1:m:0), (1:m':0)\}$,
$\cB\cap X_\infty Y_1=\{(x:1:1), (x':1:1)\}$,
$\cB\cap Y_\infty X_1=\{(1:y:1), (1:y':1)\}$,
$\cB\cap X_1Y_1=\{(1:a:a+1), (1:b:b+1)\}$.
Then $mm'=-s^3$, $xx'=1/s^3$, $yy'=s^3$, $ab=-s^3$.
\end{proposition}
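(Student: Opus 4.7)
The plan is to establish the identity $mm'=-s^3$ for $\ell_\infty$ by a multiplicative Segre-type calculation on $\cS$, and then transport this single identity to the other three external lines using the symmetry group $G$.

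Keep the notation $\cS=\cB\setminus(L_X\cup L_Y)=\{(x_i:y_i:z_i):1\le i\le q+2\}$ of Lemma~\ref{trisecants}. Observation~(C) there, combined with the conclusion of the lemma, identifies the multiset $\{y_i/x_i\}_{i=1}^{q+2}$ as $\F_q^\times$ augmented by the three extras $s,\,s\mu,\,s\mu^2$ (collapsing to three copies of $s$ when $\mu=1$). Using $\mu^3=1$ and $\prod_{t\in\F_q^\times} t=-1$ (valid for $q\neq 2$), this yields
\[
\prod_{i=1}^{q+2}\frac{y_i}{x_i}=-s^3.
\]
Split this product according to whether the point lies on $\ell_\infty$: the two points on $\ell_\infty$ contribute the factor $mm'$, while for the remaining $q$ points $z_i\neq 0$, so the affine coordinates $X_i:=x_i/z_i$ and $Y_i:=y_i/z_i$ are defined. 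Restricting observations (A) and (D) to these $q$ points gives the multisets $\{1/X_i\}=\{1/Y_i\}=\F_q^\times\cup\{1\}$, whence $\prod X_i=\prod Y_i=-1$, so the off-$\ell_\infty$ factor is $\prod (Y_i/X_i)=1$. Combining with the previous display yields $mm'=-s^3$.

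For the remaining three identities I apply the above to $F^k(\cB)$ for $k=1,2,3$. A direct matrix calculation shows that $F$ fixes the origin and sends a line through it of slope $t$ to one of slope $-1/t$, so $F^k(\cB)$ has parameter $s^3$ for $k$ even and $-1/s^3$ for $k$ odd. Since $F$ cycles the four external lines as $\ell_\infty\mapsto Y_\infty X_1\mapsto X_1Y_1\mapsto X_\infty Y_1\mapsto\ell_\infty$, the intersection $F^k(\cB)\cap\ell_\infty$ is the $F^k$-image of the two points of $\cB$ on the appropriate preimage line. Explicit computation gives $F(x:1:1)=(1:-x:0)$, $F^2(1:a:a+1)=(1:a:0)$ and $F^3(1:y:1)=(1:-1/y:0)$ (with analogous formulas for the primed points); applying the identity $mm'=-(\text{parameter})$ in turn to $F(\cB)$, $F^2(\cB)$, $F^3(\cB)$ yields $(-x)(-x')=1/s^3$, $ab=-s^3$ and $(-1/y)(-1/y')=1/s^3$, i.e., exactly $xx'=1/s^3$, $ab=-s^3$ and $yy'=s^3$. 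The only delicate point is the multiset bookkeeping in the first step---particularly verifying that the three ``extras'' in (C) multiply to $s^3$ whether or not $\mu=1$---after which the symmetries of $G$ produce the remaining identities almost for free.
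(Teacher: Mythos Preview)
Your argument is correct and essentially identical to the paper's: both prove $mm'=-s^3$ via Wilson's theorem applied to the slope multiset from Lemma~\ref{trisecants} together with the products $\prod X_i=\prod Y_i=-1$, and then transport the identity to the other three external lines using elements of $G$. The only cosmetic difference is that the paper uses the three projectivities $F^2$, $TF^3$ and $F$, while you use the more uniform choice $F$, $F^2$, $F^3$; your bookkeeping with the full $\cS$ split along $\ell_\infty$ also matches the paper's direct affine normalisation $\cS'=\{(x_i:y_i:1)\}$.
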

\begin{proof}
Let $\cS'=\cB \setminus (L_X \cup L_Y \cup \ell_\infty)=\{(x_i:y_i:1) \colon i=1,2,\ldots,q\}$, and $\cS=\cS'\cup\{(1:m:0), (1:m':0)\}$. Note that $x_i\neq 0$ and $y_i\neq0$ for all $1\leq i\leq q$. Looking at the points of $\cS'$ from $(0:1:0)$ we see that the multiset $\{x_i\colon i=1,\ldots,q\}$ contains each element of $\F_q^*$ once except for 1, which is contained twice. Thus $\prod_{i=1}^q x_i=-1$ (recall Wilson's Theorem saying $\prod_{x\in\F_q^*}x=-1$). Similarly, $\prod_{i=1}^q y_i=-1$. Next we look at the points of $\cS$ from $(0:0:1)$. By Lemma \ref{trisecants}, we know the multiset of the slopes defined by the lines $OP$, $P\in\cS$: if $q\equiv 0\pmod 3$, then it contains each element of $\F_q^*$ once except for $s$, which is contained four times; if $q\equiv 1\pmod 3$, then it contains each element of $\F_q^*$ once except for $s$, $s\mu$, $s\mu^2$, which are contained twice. As $\mu^3=1$, in both cases we get
\[\prod_{i=1}^q \frac{y_i}{x_i}mm' = -s^3.\]
Since $\prod_{i=1}^q x_i=\prod_{i=1}^q y_i=-1$, we obtain $mm'=-s^3$. The other three assertions follow easily by applying the result to the images of $\cB$ under the projectivities of $G$ in the following way.
Consider \[F^2=\begin{pmatrix}
1 & 0 & 0 \\
0 & 1 & 0 \\
1 & 1 & -1
\end{pmatrix},\:
\overline{T}:=TF^3=\begin{pmatrix}
1 & 0 & 0 \\
0 & -1 & 0 \\
1 & 0 & -1
\end{pmatrix},\mbox{\, and \:}
F = 
\begin{pmatrix}
0  & 1 & 0 \\
-1 & 0 & 0 \\
0  & 1 & -1
\end{pmatrix}.
\]
By $\varphi$ we will always denote the projectivity represented by one of them. Let $(s')^3$ be the parameter of $\cB'$, the image of $\cB$ under $\varphi$. 

Let now $\varphi=\langle F^2\rangle$. A line $[a:b:0]$ through the origin is mapped to $[a:b:0]F^2=[a:b:0]$, hence $(s')^3=s^3$. 
On the other hand, a point $(1:x:x+1)$ is mapped to $F^2(1:x:x+1) = (1:x:0)$, hence $\cB'\cap\ell_\infty = \{(1:a:0),(1:b:0)\}$, and thus $ab=-s^3$.
Let $\varphi=\langle \overline{T}\rangle$. A line $[a:b:0]$ through the origin is mapped to $[a:b:0]\overline{T}=[a:-b:0]$, which yields that $(s')^3=-s^3$. 
On the other hand, a point $(1:t:1)$ is mapped to $\overline{T}(1:t:1) = (1:-t:0)$, hence $\cB'\cap\ell_\infty = \{(1:-y:0),(1:-y':0)\}$, and thus $(-y)(-y')=yy'=-(s')^3=s^3$.
Finally, let $\varphi=\langle F\rangle$. A line $[a:b:0]$ is mapped to $[a:b:0]F^3=[-b:a:0]$, which yields that $(s')^3=-1/s^3$. 
On the other hand, a point $(t:1:1)$ is mapped to $F(t:1:1) = (1:-t:0)$, hence $\cB'\cap\ell_\infty = \{(1:-x:0),(1:-x':0)\}$, and thus $(-x)(-x')=xx'=-(s')^3=1/s^3$.
\end{proof}

\begin{proposition}\label{notsosymm}
Suppose that $\cB$ is a double blocking set of size $3q-1$ in $\PG(2,q)$, $q\neq2$, where all points of $L_X$ and $L_Y$ are in $\cB$ except for the points $X_1$, $X_\infty$, $Y_1$ and $Y_\infty$. Then there is at most one nontrivial projectivity fixing $\cB$, and if there is one, it must correspond to a reflection in $G\simeq D_4$.
\end{proposition}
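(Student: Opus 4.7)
I would first reduce the claim to a single statement: $F^2\notin\operatorname{Stab}(\cB)$. On the one hand, any projectivity $\varphi$ fixing $\cB$ must permute the $(q-1)$-secants of $\cB$ and the four holes; combined with the requirement that the pair $\{X_1,X_\infty\}$ is either fixed or swapped with $\{Y_1,Y_\infty\}$, this forces $\varphi\in G$. On the other hand, a direct inspection of the subgroup lattice of $D_4$ shows that the subgroups of $D_4$ not containing $F^2$ are precisely the trivial subgroup and the four order-two subgroups generated by a single reflection; hence $F^2\notin\operatorname{Stab}(\cB)$ (together with $\operatorname{Stab}(\cB)\subseteq G$) yields both parts of the conclusion simultaneously.

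To prove $F^2\notin\operatorname{Stab}(\cB)$, assume for contradiction $F^2(\cB)=\cB$. A key relation comes from $F^2\cdot(1:a:a+1)=(1:a:0)$: combined with Proposition~\ref{relations}, this forces $\{a,b\}=\{m,m'\}$, and since the line through $O$ and $(1:a:a+1)$ carries the three $\cB$-points $O$, $(1:a:0)$, and $(1:a:a+1)$, the slopes $a$ and $b$ must both be long-secant slopes. In characteristic~$3$ there is only one long-secant slope (Lemma~\ref{trisecants} with $\mu=1$), so $a=b$, contradicting the distinctness of the two points of $\cB\cap X_1Y_1$. In characteristic~$2$ a direct computation shows that $F^2$ is an elation with center $O$ and axis the line $\{y=x\}$; on every other line through $O$ the point $O$ is the only fixed point, so the $\cB$-point other than $O$ on any bisecant through $O$ (which exists by Corollary~\ref{Osecants}) would have to be fixed by $F^2$ yet lie off the axis, a contradiction.

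The main obstacle is the remaining case of odd characteristic $\neq 3$ (so $q\equiv 1\pmod 3$). Here $F^2$ is a harmonic homology with center $O$ and axis $\cA\colon x+y=2$. I would combine $\{a,b\}\subseteq\{s,s\mu,s\mu^2\}$ with $ab=-s^3$ from Proposition~\ref{relations} to obtain $s^3=-1$, hence $\{m,m'\}=\{a,b\}=\{-\mu,-\mu^2\}$; in the same spirit, the $F^2$-induced relation $\{y,y'\}=\{1/x,1/x'\}$ together with the bisecant status of slope~$1$ yields $\{x,x'\}=\{y,y'\}=\{1,-1\}$. Next, on each of the $q-4$ bisecants through $O$, the $\cB$-point other than $O$ must be fixed by $F^2$ and therefore lies on $\cA$. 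Piecing everything together pins down $\cS':=\cB\setminus(L_X\cup L_Y\cup\ell_\infty)$ explicitly as five ``boundary'' points on the lines $X_1Y_1$, $X_\infty Y_1$, $Y_\infty X_1$, together with $q-5$ points of the form $(2/(1+\sigma),\,2\sigma/(1+\sigma))$ on $\cA$, one for each bisecant slope $\sigma\neq 1$. A direct enumeration of the first coordinates of $\cS'$-points then shows that they miss at least two non-zero values $c\in\F_q^*\setminus\{1\}$; for any such $c$ the line $x=c$ meets $\cB$ only at the single point $(c,0)\in L_X$, violating the double-blocking property. The delicate part of this step is the uniform abscissa count: in the small characteristics $p=7$ and $p=13$ certain parameters coincide and the intersection of the ``boundary'' abscissa set with the ``missing'' abscissa set grows to size $2$, but a careful case analysis shows that at least two non-zero abscissae are still missing, so a failing vertical line always exists.
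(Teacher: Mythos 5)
Your overall architecture is sound, and where it applies it is verifiably correct: the reduction of the proposition to $F^2\notin\operatorname{Stab}(\cB)$ via the subgroup lattice of $D_4$ is equivalent to the paper's opening move; your characteristic-$2$ elation argument is a cleaner substitute for the paper's count $q+2\geq 2(q-2)$; and in the branch $s^3=-1$ your explicit determination of $\cS'$ and the abscissa count (I checked $u+v=1$ and $uv=1/3$ for $u=1/(1-\mu)$, $v=1/(1-\mu^2)$, the char-$7$ and char-$13$ coincidences $2\in\{u,v\}$ and $2u=-1\iff\mu=3$, and that a single uncovered abscissa $c\in\F_q^*\setminus\{1\}$ already kills the configuration, so ``two'' is overkill) is a genuine alternative to the paper's finish, which instead chases blocked lines such as $Y_1H_2$ and $Y_1H_3$ through a six-point set. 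However, there is a genuine gap at the very start of your case analysis: the step ``$\{a,b\}\subseteq\{s,s\mu,s\mu^2\}$'' tacitly assumes both points of $\cB\cap X_1Y_1$ are affine. The line $X_1Y_1=[1:1:-1]$ contains the infinite point $(1:-1:0)$, which is $(1:a:a+1)$ with $a=-1$; in that case $(1:a:a+1)$ and $(1:a:0)$ coincide, the slope-$a$ line through $O$ carries only two known points of $\cB$, and no long-secant conclusion follows for $a$. In this degenerate branch your relations yield only $b\in\{s,s\mu,s\mu^2\}$ together with $b=s^3$ (from $ab=-s^3$), which forces $s^3=1$ (since $s^3=-1$ would give $b=-1=a$, contradicting distinctness). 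This branch is not vacuous at the level of the constraints you use: with $s^3=1$ the slope-$(-1)$ line through $O$ is a bisecant whose second $\cB$-point is exactly the axis point $(1:-1:0)$, in perfect accord with your fixed-point rule, and $\{m,m'\}=\{1,-1\}$, $\cB\cap X_1Y_1=\{(1:1:2),(1:-1:0)\}$. This is precisely the paper's case $s^3=1$ (with $s=1$ assumed), which consumes half of its hard analysis: there $H_1=(1:1:1)\notin\cB$, the $\cS$-points of $X_\infty Y_1$ and $Y_\infty X_1$ are forced onto $\ell_2,\ell_3$, one is driven to $\mu=3$ and $13\mid q$, and a second line delivers the contradiction. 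Your claim that $s^3=-1$ always holds is therefore false as stated, and your $\cS'$-pinning and abscissa enumeration cover only the $s^3=-1$ branch.

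The same loophole infects your characteristic-$3$ shortcut: there, if $a=-1$, you obtain only $b=s$ and $b=s^3$, hence $s=1$ with $\{m,m'\}=\{-1,1\}$, and distinctness gives no contradiction. The paper's char-$3$ argument is immune to this: every bisecant through $O$ must deposit its second point on the axis $[1:1:-2]=[1:1:1]$, which together with the two axis points on $L_X$ and $L_Y$ makes that axis a $q$-secant, whence $|\cB|\geq 3q$. To repair your proof you must treat $(1:-1:0)\in\cB$ (equivalently $s^3=1$) as a separate case in both residues mod $3$ --- for instance by importing the paper's char-$3$ axis argument and, for $q\equiv1\pmod 3$, rerunning your pin-down-and-count strategy on the quite different set $\cS'$ that arises when $s=1$ (there $Q_1=(1:\mu:1)$, $R_1=(1:\mu^2:1)$, $Q_2=(\mu^2:1:1)$, $R_2=(\mu:1:1)$, $P_1=(1:1:2)$, $P_2=(1:1:0)$, plus axis points). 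As it stands, the omission is not a technicality but a missing half of the main case.
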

\begin{proof}
Let $\cS=\cB \setminus (L_X \cup L_Y)$ and let $\cL_O$ denote the set of lines through the origin different from $L_X$ and $L_Y$. 
Suppose that there are two nontrivial projectivities fixing $\cB$. Then either at least one of them or their product is a rotation in $G$, thus the subgroup generated by them must contain $\varphi:=\langle F^2\rangle=\left\langle\begin{pmatrix}1&0&0\\0&1&0\\1&1&-1\end{pmatrix}\right\rangle$, which thus also fixes $\cB$. It is easy to see that $\varphi^2$ is the identity and the fixpoints of $\varphi$ are $(0:0:1)$ and the points of the line $[1:1:-2]$. 
For any $\ell\in\cL_O$, $\varphi$ fixes $\ell$ and it follows that 
\begin{equation}
\ell\in\cL_O \Rightarrow \left|\ell\cap\cB\setminus\left((\ell\cap[1:1:-2])\cup\{(0:0:1)\}\right)\right|\equiv 0 \pmod 2. \label{even}
\end{equation}
Suppose now $2\mid q$ and recall that we may assume $q\geq 8$. Then $[1:1:-2]=[1:1:0]$ passes through $(0:0:1)$. 
Every line in $\cL_O\setminus\{[1:1:0]\}$ contains at least one, and thus by \eqref{even}, at least two points of $\cS$, whence $q+2=|\cS|\geq 2(q-2)$, a contradiction. 
Thus we may assume that $q$ is odd. By Corollary \ref{Osecants}, in $\cL_O$ there are either three trisecants (if $q\equiv 1 \pmod 3$) or one five-secant (if $q\equiv 0 \pmod 3$) to $\cB$ and the rest of the lines of $\cL_O$ are bisecants to $\cB$. As $[1:1:-2]\notin\cL_O$, it follows from \eqref{even} that for all $\ell\in\cL_O$, $|\ell\cap\cS| = 1\iff \ell\cap[1:1:-2]\in\cS$. 
Suppose now $3\mid q$. By Corollary \ref{Osecants}, there is one line in $\cL_O$ containing more than one point of $\cS$, hence $[1:1:-2]$ is a $q$-secant to $\cB$; thus $|\cB|\geq 3q$, a contradiction. 
Thus we may assume that $q\equiv 1\pmod 3$. Let the three trisecants to $\cB$ in $\cL_O$ be $\ell_1=[s:-1:0]$, $\ell_2=[\mu s:-1:0]$ and $\ell_3=[\mu^2s:-1:0]$, where $s,\mu\in\F_q^*$ and $\mu^2+\mu+1=0$ (cf.~Lemma \ref{trisecants}). Then $[1:1:-2]$ is a $(q-2)$-secant to $\cB$ with holes (i.e., points not in $\cB$) $H_i:=\ell_i\cap[1:1:-2]$, $i=1,2,3$.
\begin{figure}[!ht]
\psfrag{x}{$X_1$}\psfrag{xi}{$X_\infty$}\psfrag{y}{$Y_1$}\psfrag{yi}{$Y_\infty$}\psfrag{p1}{$P_1$}\psfrag{q1}{$Q_1$}\psfrag{r1}{$R_1$}\psfrag{p2}{$P_2$}\psfrag{q2}{$Q_2$}\psfrag{r2}{$R_2$}\psfrag{l1}{$\ell_1$}\psfrag{l2}{$\ell_2$}\psfrag{l3}{$\ell_3$}\psfrag{h1}{$H_1$}\psfrag{h2}{$H_2$}\psfrag{h3}{$H_3$}
\begin{center}
\includegraphics[width=0.95\textwidth]{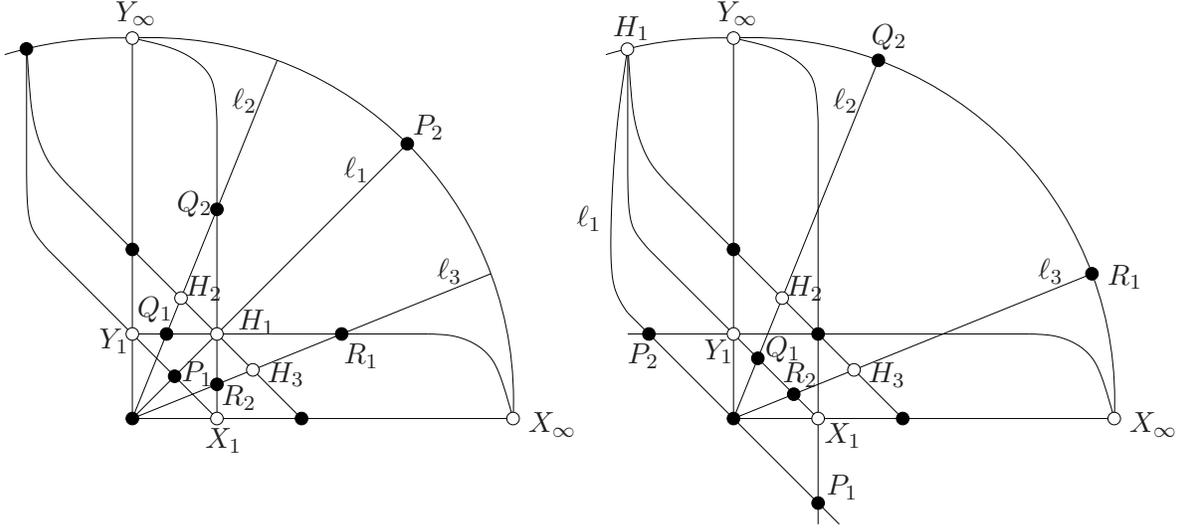}
\caption{Arrangements for $q\equiv1\pmod3$, $s^3=1$ (to the left) and $s^3=-1$ (to the right).\label{fig1}}
\end{center}
\end{figure}

Suppose $s^3=1$; we may assume $s=1$ (see Figure \ref{fig1}). Then $H_1=(1:1:1)$ cannot be in $\cS$, and $H_2=(2:2\mu:1+\mu)$, $H_3=(2:2\mu^2:1+\mu^2)$. 
As the lines $X_\infty Y_1=H_1X_\infty$ and $Y_\infty X_1=H_1Y_\infty$ must contain two points in $\cS$, we see that  $Q_1:=X_\infty Y_1\cap\ell_2=(1:\mu:1)$, $R_1:=X_\infty Y_1\cap\ell_3=(1:\mu^2:1)$, $Q_2:=Y_\infty X_1\cap\ell_2=(\mu^2:1:1)$ and $R_2:=Y_\infty X_1\cap\ell_3=(\mu:1:1)$ must be in $\cS$.  None of these four points are on $X_1Y_1=[1:1:-1]$, which is a $2$-secant to $\cS$; thus $P_1:=[1:1:-1]\cap\ell_1=(1:1:2)$ must be in $\cS$, as well as $P_2:=(1:1:2)^\varphi=(1:1:0)$. Let $\cS'=\{P_1,P_2,Q_1,Q_2,R_1,R_2\}$. Consider $Y_\infty H_2=[\mu+1:0:-2]$. This line must contain a point of $\cS'$ which, clearly, cannot be $Q_1$, $Q_2$, $R_2$ or $P_2$. As $R_1=(1:\mu^2:1)\in[\mu+1:0:-2]\iff \mu=1$ is not possible, we have $P_1=(1:1:2)\in[\mu+1:0:-2]$, that is, $\mu=3$. As $\mu^2+\mu+1=13=0$, this yields $13\mid q$. Consider now $Y_\infty H_3=[\mu^2+1:0:-2]=[5:0:-1]$. This line also must contain a point of $\cS'$, which clearly cannot be $P_2$, $Q_2$, $R_1$ or $R_2$. As neither $Q_1=(1:3:1)\in[5:0:-1]$ nor $P_1=(1:1:2)\in[5:0:-1]$ holds, we obtained a contradiction. Hence $s^3=1$ cannot hold.

Now, as $s^3\neq 1$, $(1:1:1)\in\cB$ must hold. Then, by Proposition \ref{relations}, $P_1:=(1:s^3:1)$ and $P_2:=(s^{-3}:1:1)$ are also in $\cB$. As these points are not on $[1:1:-2]$, $(1:s^3:1)\in\cB$ yields $s^3\in\{s,\mu s, \mu^2 s\}$, whence $s=-1$ may be assumed and $H_1=(1:-1:0)$, $H_2=(2:-2\mu:1-\mu)$, $H_3=(2:-2\mu^2:1-\mu^2)$ follow (see Figure \ref{fig1}). Note that $P_1=(1:-1:1)$ and $P_2=(-1:1:1)$ are in $\ell_1$. As $H_1=(1:-1:0)\notin\cB$, the two points of $\cB$ on the line $[1:1:-1]$ must be its intersection points with $\ell_2$ and $\ell_3$, namely, $Q_1:=(1:-\mu:1-\mu)$ and $R_1:=(1:-\mu^2:1-\mu^2)$. Their images under $\varphi$, $Q_2:=(1:-\mu:0)$ and $R_2:=(1:-\mu^2:0)$, are also in $\cB$. Let $\cS'=\{P_1,P_2,Q_1,Q_2,R_1,R_2\}$. 
Consider $Y_1H_2=[\mu+1:2:-2]$. This line must contain a point of $\cS'$ which, clearly, cannot be $Q_1$, $Q_2$ or $R_1$. As neither $R_2=(1:-\mu^2:0)\in[\mu+1:2:-2]\iff 3\mu^2=0$ nor $P_2=(-1:1:1)\in[\mu+1:2:-2]\iff \mu=-1$ is possible, we obtain that $P_1=(1:-1:1)\in[\mu+1:2:-2]$, from which $\mu=3$ and $13\mid q$ follow. Consider now $Y_1H_3=[\mu^2+1:2:-2]=[3:-2:2]$. This line also must contain a point of $\cS'$ which clearly cannot be $R_1$, $R_2$ or $Q_1$. But none of $P_1=(1:-1:1)$, $P_2=(-1:1:1)$ and $Q_2=(1:-3:0)$ lie on $[-3:2:-2]$, thus we end up with a final contradiction.
\end{proof}

If $q$ is not a prime, then there are double blocking sets in $\PG(2,q)$ that are much smaller than $3q$. Thus in this case not the size but the structure of such constructions may be of interest. At the end of this section, let us point out that minimality is not an issue in our case.

\begin{proposition}
Let $\cB$ be a blocking set in a projective plane of order $q>4$ of size $3q-1$ that admits two $(q-1)$-secants whose intersection point is in $\cB$. Then $\cB$ is minimal.
\end{proposition}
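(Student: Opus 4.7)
The plan is to show that every point $B\in\cB$ lies on some 2-secant to $\cB$; removing $B$ then turns that 2-secant into a tangent, violating the double blocking property. So the proof reduces to exhibiting a 2-secant through each point of $\cB$.

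The main tool is a short hole-counting observation. Let $H\in\ell\setminus\cB$ (such holes exist because $\ell$ is a $(q-1)$-secant). Since $H\notin\cB$, summing $|n\cap\cB|$ over the $q+1$ lines through $H$ gives $|\cB|=3q-1$; subtracting the contribution $q-1$ of $\ell$ itself leaves $2q$ distributed over the $q$ lines through $H$ distinct from $\ell$. Double blocking forces each such line to contribute at least $2$, so equality holds throughout, and every non-$\ell$ line through $H$ is exactly a 2-secant. The same conclusion applies to the (two) holes of $m$ different from $P$.

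For $B\in\cB\setminus\{P\}$ I would pick a hole on the opposite axis. If $B\in\ell\setminus\{P\}$, choose a hole $H$ on $m$ with $H\neq P$; then $BH\neq\ell,m$ (since $B\notin m$ and $H\notin\ell$), so $BH$ is a 2-secant through $B$. The case $B\in m\setminus\{P\}$ is symmetric. If $B\notin\ell\cup m$, any hole $H$ of $\ell$ suffices, as $BH\neq\ell$. In every subcase $B$ is essential.

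The remaining case $B=P$ needs a separate count, because $P$ and any hole on $\ell\cup m$ are collinear via $\ell$ or $m$, and so the hole lemma gives nothing new. The $q-1$ lines through $P$ distinct from $\ell$ and $m$ partition the points off $\ell\cup m$, so together they meet $\cB$ in precisely the $q+2$ points of $\cB\setminus(\ell\cup m)$. Writing $n_i$ for the number of such lines meeting $\cB\setminus\{P\}$ in exactly $i$ points (with $i\geq 1$ by double blocking), we get $\sum n_i=q-1$ and $\sum i\,n_i=q+2$, whence $\sum_{i\geq 2}(i-1)n_i=3$. Thus at most three of these lines satisfy $i\geq 2$, leaving at least $q-4\geq 1$ with $i=1$, i.e., 2-secants through $P$. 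This is the only slightly delicate step; the hypothesis $q>4$ is used precisely here.
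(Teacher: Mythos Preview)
Your proof is correct and follows essentially the same approach as the paper's. Both arguments hinge on the observation that through any hole of one of the two $(q-1)$-secants, every line other than that secant is forced to be an exact 2-secant by the count $3q-1=(q-1)+2\cdot q$; this immediately makes every point off that secant essential, and repeating with a hole on the other secant leaves only the intersection point $P$ to handle. For $P$, the paper argues by contradiction (if $P$ were inessential then looking around from $P$ yields $3q-2\geq 2(q-2)+2(q-1)$, forcing $q\leq 4$), whereas you count directly that at most three of the $q-1$ lines through $P$ distinct from $\ell,m$ can fail to be 2-secants; these are two phrasings of the same count, and both use $q>4$ at exactly this point.
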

\begin{proof}
Let $\ell_1$ and $\ell_2$ be the two $(q-1)$-secants, $\{P_i,Q_i\}:=\ell_i\setminus\cB$, $i=1,2$. Considering the lines through $P_1$ and the points of $\cB$ on them, we see $q-1$ points on $\ell_1$, at least two points on $P_1P_2$ and $P_1Q_2$, and at least one point on each of the $q-2$ further lines $P_1P$, $P\in\ell_2\cap\cB$, $P\neq\ell_1\cap\ell_2$. This is at least $3q-1$ points altogether, thus equality must hold everywhere, whence we see that all points of $\cB\setminus \ell_1$ are essential for $\cB$. Repeating the argument with $P_2$ we get that the only point that could be superfluous is $\ell_1\cap\ell_2=:O$. In this case, looking around from $O$ we obtain $|\cB\setminus\{O\}|=3q-2\geq 2\cdot(q-2)+(q-1)\cdot2$, that is, $q\leq 4$.

\end{proof}

\section{Proof of Theorem \ref{Hillconjthm}}
\label{Hillconj}

Let $\cB$ be a double blocking set in $\PG(2,q)$ of size $3q-1$. Suppose that $\cB$ contains all the points of a line $\ell$. Consider $\ell$ as the line at infinity. Then $\cB\setminus\ell$ is a blocking set of $\AG(2,q)$ which, by the well-known result of Jamison \cite{Jamison} (and, independently, Brouwer and Schrijver \cite{BSch}), must have at least $2q-1$ points. Thus $|\cB|\geq 3q$, a contradiction. 

Recall that a double blocking set having a $q$-secant has at least $3q$ points. Suppose now that $\cB$ is obtained from a triangle by removing six of its points and adding five. Let us denote the three vertices of the triangle by $A$, $B$ and $C$. The 6 points removed from the sides will be called \emph{holes}. By the previous remarks, there must be two holes on each side. For $I\in\{A,B,C\}$, let $\ell_I$ denote the side of the triangle for which $I\notin\ell_I$, and we denote the holes on $\ell_I$ by $I_1$ and $I_2$. 
The 5 points of the blocking set not on the sides of the triangle will be called \emph{midpoints}, and we denote them by $P_1,\ldots,P_5$. We may assume that the three vertices of the triangle are the three base points $A=(0:0:1)$, $B=(1:0:0)$, and $C=(0:1:0)$.

The proof comes in two subsections depending on whether or not there are one or more triplets of the holes that are collinear.

\subsection{Holes in general position}

In this subsection we assume that the holes are in general position, that is, no three of them are collinear.
Let us denote the set of lines joining a vertex of the triangle with one of the holes of the opposite side by $\cL$.  
Note that $|\cL|=6$, thus there is a midpoint incident with at least two lines of $\cL$. Applying a suitable projectivity, we may move such a midpoint to $(1:1:1)$ without moving the triangle.

\begin{lemma}
\label{lemma1}
If the holes are in general position, then there is no midpoint incident with three lines of $\cL$.
\end{lemma}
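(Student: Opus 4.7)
The plan is to argue by contradiction. Suppose a midpoint $M$ lies on three lines of $\cL$. Since the two lines of $\cL$ through a common vertex meet only at that vertex, $M$ can lie on at most one line of $\cL$ per vertex, hence on exactly one. After relabeling the holes so that $M \in AA_1 \cap BB_1 \cap CC_1$, the transitivity of $\PGL(2,q)$ on quadruples of points in general position allows me to assume $M = (1:1:1)$; short cross-product computations then force $A_1 = (1:1:0)$, $B_1 = (0:1:1)$, $C_1 = (1:0:1)$. I will write the remaining holes as $A_2 = (1:a:0)$, $B_2 = (0:1:b)$, $C_2 = (1:0:c)$ with $a, b, c \in \F_q \setminus \{0, 1\}$. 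The case $q$ even is disposed of immediately: the determinant of $A_1, B_1, C_1$ as column vectors equals $2$, so in characteristic $2$ these three holes are collinear, contradicting general position.

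For the remaining case (odd $q$), the structural key is the following claim, proved by a short double count: every non-side line through a hole $H$ is a bisecant of $\cB$. Indeed, if $H$ lies on side $\ell$, then $|\cB \setminus \ell| = 2q$, the $q$ non-side lines through $H$ partition $\cB \setminus \ell$, and each of them already contains at least two points of $\cB$. Since no three holes are collinear, the two side intersections of such a line cannot both be holes, so the line contains a midpoint of $\cB$ if and only if it passes through the opposite vertex or through one of the four holes on the two other sides. This yields exactly five such lines through each hole, each containing exactly one of the five midpoints, giving a bijection between the five midpoints and these five lines.

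Applying this at $A_2$, the midpoint $M$ must lie on one of $AA_2, A_2 B_1, A_2 B_2, A_2 C_1, A_2 C_2$. Substituting $(1:1:1)$ into the corresponding line equations rules out $AA_2$ (which would force $a = 1$), $A_2 B_1$ (would force $a = 0$), and $A_2 C_1$ (would force $-1 = 0$). Hence $M$ lies on $A_2 B_2$ or $A_2 C_2$. Analogous arguments at $B_2$ and at $C_2$ yield $M \in A_2 B_2 \cup B_2 C_2$ and $M \in A_2 C_2 \cup B_2 C_2$, respectively. Thus $M$ lies on at least two of the three lines $A_2 B_2, A_2 C_2, B_2 C_2$. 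Any two of these meet only at a point of $\{A_2, B_2, C_2\}$, forcing $M$ to be a hole, which is impossible; and having all three lines concurrent would require $A_2, B_2, C_2$ to be collinear, once more contradicting general position. I expect the main obstacle to be bookkeeping rather than any conceptual difficulty: tracking which five lines through each hole carry a midpoint and verifying the three analogous restrictions on $M$ obtained at the three ``second'' holes.
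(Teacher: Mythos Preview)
Your proof is correct and takes a genuinely different, more economical route than the paper's.

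Both arguments normalise coordinates the same way (with $M=(1:1:1)$ and $A_1,B_1,C_1$ determined accordingly), and both rest on the same counting fact: through any hole there are exactly five non-side lines carrying a midpoint, namely the line to the opposite vertex and the four lines to the holes on the other two sides, and these set up a bijection with the five midpoints. The paper, however, applies this observation only at the \emph{first} holes $A_1,B_1,C_1$; it then tracks down explicit coordinates for $P_2,\ldots,P_5$ in terms of the free parameters $x,y,m$, and pushes through a two-case algebraic analysis (deriving systems such as $x+y+m+xym=0$, etc.) to reach a contradiction.

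Your key idea is to apply the same bijection at the \emph{second} holes $A_2,B_2,C_2$ instead. Three of the five lines through each of these are ruled out by a one-line incidence check (forcing $a=0$, $a=1$, or $-1=0$), so $M$ must lie on one of $A_2B_2,A_2C_2$ (from $A_2$), one of $A_2B_2,B_2C_2$ (from $B_2$), and one of $A_2C_2,B_2C_2$ (from $C_2$). No single line meets all three constraints, so $M$ lies on at least two of them; but pairwise these lines meet only in $A_2,B_2,C_2$, which are holes. This dispatches the odd-$q$ case without ever computing $P_2,\ldots,P_5$ or solving any polynomial system. The paper's approach has the advantage of already setting up the explicit midpoint coordinates that are reused in the subsequent Proposition~\ref{genholes}, but for the lemma in isolation your argument is decidedly cleaner.
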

\begin{proof}
Suppose the contrary and let $P_1$ be a midpoint incident with three lines of $\cL$. Then we may assume \[P_1=A_1A\cap B_1B\cap C_1C=(1:1:1)\] and hence $A_1=(1:1:0)$, $B_1=(0:1:1)$, and $C_1=(1:0:1)$ are holes. Note that if $q$ is even, then these are collinear (the line $[1:1:1]$ joins them), a contradiction. Hence we may assume that $q$ is odd.

\begin{figure}[!ht]
\footnotesize
\begin{center}
\psfrag{a}{$(0:0:1)\!=\!A$}\psfrag{b}{$B\!=\!(1:0:0)$}\psfrag{c}{$C\!=\!(0:1:0)$}\psfrag{a1}{$A_1\!=\!(1:1:0)$}\psfrag{b1}{$(0:1:1)\!=\!B_1$}\psfrag{c1}{$(1:0:1)\!=\!C_1$}\psfrag{a2}{$A_2\!=\!(1:m:0)$}\psfrag{b2}{$(0:y:1)\!=\!B_2$}\psfrag{c2}{$C_2\!=\!(x:0:1)$}\psfrag{p1}{$P_1\!=\!(1:1:1)$}
\hspace{-1.75cm}\includegraphics[height=4.6cm]{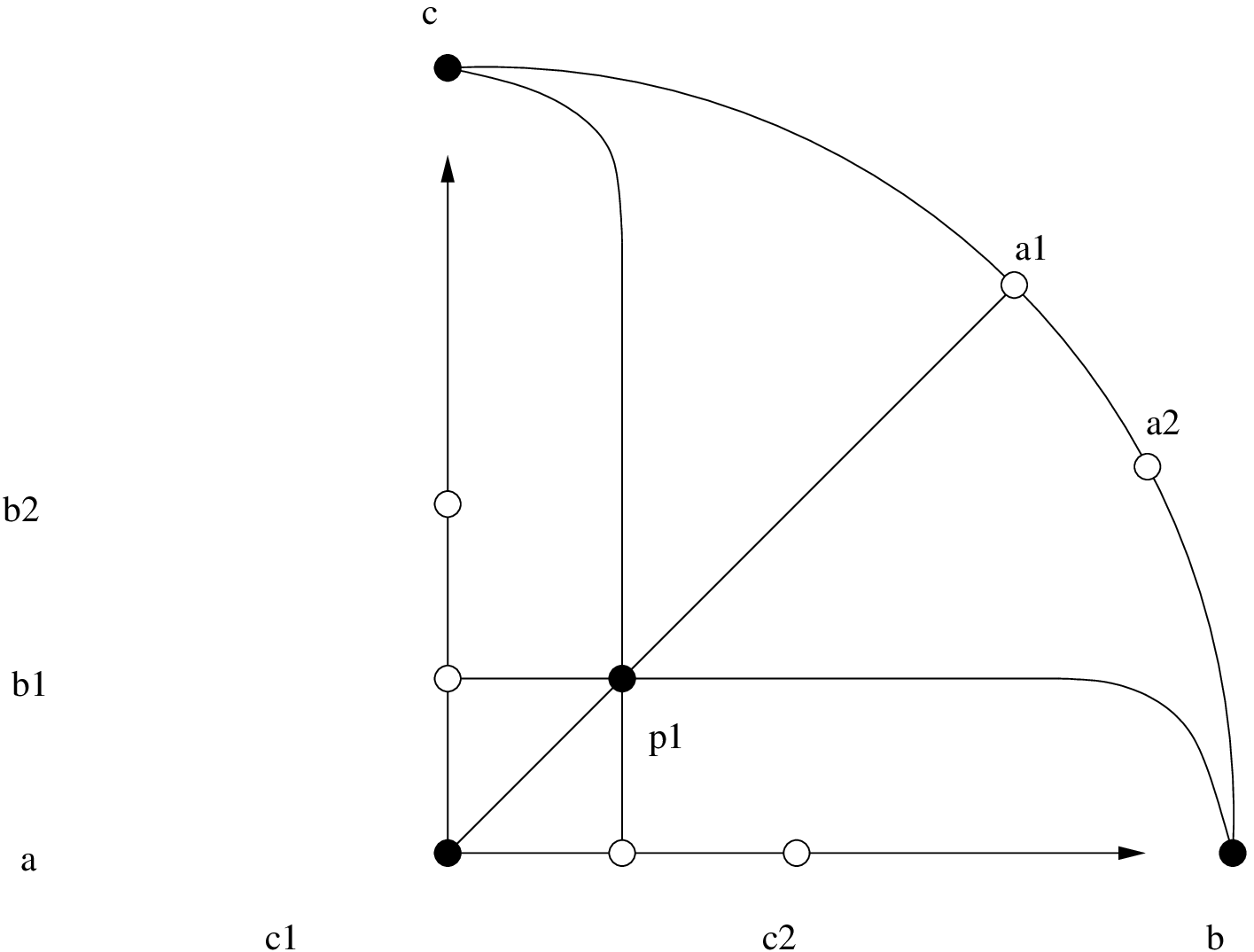}
\hspace{1.5cm}
\psfrag{a}{$(0:0:1)\!=\!A$}\psfrag{b}{$B\!=\!(1:0:0)$}\psfrag{c}{$C\!=\!(0:1:0)$}\psfrag{a1}{$A_1\!=\!(1:m:0)$}\psfrag{a12}{$\!=\!(1-x:1:0)$}\psfrag{b1}{$(0:1:1)\!=\!B_1$}\psfrag{c1}{$(1:0:1)\!=\!C_1$}\psfrag{a2}{$A_2\!=\!(1:m':0)$}\psfrag{a22}{$\!=\!(1:1-y:0)$}\psfrag{b2}{$(0:y:1)\!=\!B_2$}\psfrag{c2}{$C_2\!=\!(x:0:1)$}\psfrag{p1}{$P_1\!=\!(1:1:1)$}
\includegraphics[height=4.6cm]{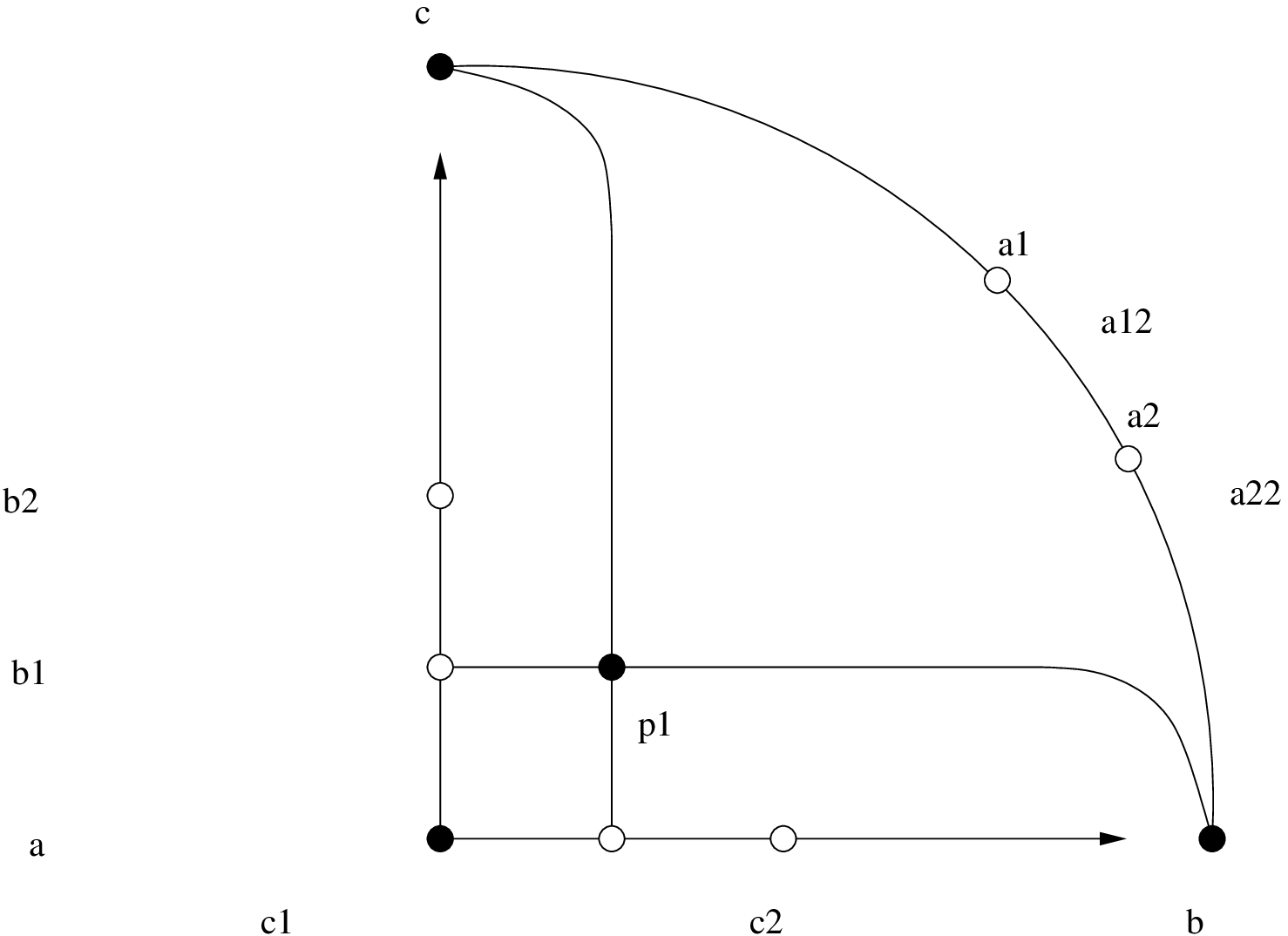}
\end{center}
\caption{The arrangement of the triangle and the holes. \label{fig2}}
\normalsize
\end{figure}

Denote the remaining holes by $C_2=(x:0:1)$, $B_2=(0:y:1)$ and $A_2=(1:m:0)$ (see Figure \ref{fig2} left). 
Looking at lines passing through $A_1$ we get that there is a midpoint incident with each of the lines
\[A_1B_2,\: A_1B_1,\: A_1C_1,\: A_1C_2. \]
The line $C_1B_1=[1:1:-1]$ is also incident with a midpoint, and this midpoint can lie neither on $A_1C_1$ nor on $A_1B_1$; also, it cannot be $P_1$. 
It follows that this midpoint is incident either with $A_1B_2$ or with $A_1C_2$. By interchanging the role of the $X$ and $Y$ axes, we may assume that it is incident with $A_1C_2=[-1:1:x]$ and hence \[P_2:=C_1B_1\cap A_1C_2=(1+x:1-x:2)\] is a midpoint.
Now consider the line $C_1B_2=[y:1:-y]$. It can be incident neither with $P_1$ nor with $P_2$, thus it is incident with a midpoint from one of the lines
$A_1B_2$, $A_1B_1$, $A_1C_1$. Clearly, this line has to be $A_1B_1=[1:-1:1]$ and hence \[P_3:=C_1B_2\cap A_1B_1=(y-1:2y:y+1)\] is a midpoint.
The line $C_1A_2=[-m:1:m]$ has to be incident with a midpoint $P_4$ which is clearly different from the previous midpoints. Also, $C_1A_2$ cannot have a common midpoint with the line $A_1C_1$, thus $P_4$ is incident with $A_1B_2=[1:-1:y]$ and hence \[P_4=C_1A_2\cap A_1B_2=(m+y:m+my:m-1).\]
Finally, there must be a midpoint incident with the line $B_1A_2=[m:-1:1]$ and this midpoint cannot coincide with the previous ones, thus it must be incident also with the line $C_1A_1=[-1:1:1]$, hence \[P_5:=B_1A_2\cap C_1A_1=(2:m+1:1-m)\] is a midpoint.

The line $B_1C_2=[1:x:-x]$ has to be incident with $P_4$ since all the other midpoints lie on different lines through $B_1$, thus
\begin{equation}
\label{eq1}
x+y+m+xym=0.
\end{equation}
It is easy to see that
\begin{itemize}
  \item $B_2A_2=[m:-1:y]$ is incident with at least one of $P_1$ and $P_2$,
  \item $B_2B=[0:-1:y]$ is incident with at least one of $P_5$ and $P_2$,
  \item $C_2B_2=[y:x:-xy]$ is incident with at least one of $P_1$ and $P_5$.
\end{itemize}
We distinguish two cases. In the first case we suppose that $B_2A_2$ is blocked by $P_1$, that is,
\begin{equation*}
y+m-1=0.
\end{equation*}
Then, as $P_1 \notin C_2B_2$, $P_5\in C_2B_2$ must hold, that is,
\begin{equation}
\label{eq3}
x+2y+mx-xy+xym=0.
\end{equation}
Also, since $P_5 \in C_2B_2$, we have $P_5\notin B_2B$ and hence $P_2 \in B_2B$, which means
\begin{equation*}
1-x=2y.
\end{equation*}
Then $y=1-m$ and $x=1-2y=2m-1$. Putting back these into \eqref{eq1} and \eqref{eq3} and subtracting these two equations from each other gives
\[2(m-1)(2m-1)=0,\]
a contradiction (as both $m=1$ and $2m-1=x=0$ are impossible).

In the second case we have $P_2\in B_2A_2$, that is,
\begin{equation}
\label{eq4}
mx=1-x-m-2y.
\end{equation}
Then $P_5 \in B_2B$ follows and hence
\begin{equation}
\label{eq5}
my=y-m-1.
\end{equation}
Finally, $P_1\in C_2B_2$; in other words,
\begin{equation}
\label{eq6}
xy=x+y.
\end{equation}
Combining \eqref{eq1}, \eqref{eq4}, \eqref{eq5}, and \eqref{eq6}, we obtain
\begin{align*}
0&=x+y+m+(x+y)m\\
 &= x+y+m+1-x-m-2y+y-m-1 = -m,
\end{align*}
 a contradiction.
\end{proof}

\begin{proposition}\label{genholes}
If the holes are in general position, then there is no 2-fold blocking set with the given properties.
\end{proposition}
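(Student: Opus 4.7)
The plan is to adapt the argument of Lemma \ref{lemma1} to the setting where the distinguished midpoint lies on only two, rather than three, lines of $\cL$. First I would observe that every line of $\cL$ meets the union of the three sides of the triangle in exactly one point of $\cB$ (namely the vertex it passes through) and so must carry at least one midpoint. As each midpoint lies on at most one line of $\cL$ through any fixed vertex, it lies on at most three lines of $\cL$ in total, and Lemma \ref{lemma1} cuts this down to at most two. Since $|\cL|=6>5$ but only five midpoints are available, the pigeonhole principle yields a midpoint $P_1$ incident with exactly two lines of $\cL$; these lines pass through two distinct vertices, so after applying a suitable projectivity fixing the triangle we may take $P_1=AA_1\cap BB_1=(1:1:1)$ with $A_1=(1:1:0)$ and $B_1=(0:1:1)$. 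The remaining holes then have the form $A_2=(1:m:0)$, $B_2=(0:y:1)$, $C_1=(x':0:1)$, $C_2=(x:0:1)$, and the general position hypothesis together with $P_1\notin CC_1\cup CC_2$ forces $m,y,x,x'\in\F_q\setminus\{0,1\}$ and $x\neq x'$.

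The four lines $AA_2$, $BB_2$, $CC_1$, $CC_2$ of $\cL$ must each be blocked by one of the remaining midpoints $P_2,\ldots,P_5$. I would split according to whether this assignment is a bijection --- each of the four lines gets its own midpoint --- or whether some midpoint covers two of the four, forcing another midpoint to lie on no line of $\cL$ at all. In each subcase I would then exploit the further bisecant conditions: the twelve lines joining two holes on different sides (such as $A_1B_1$, $A_iB_j$, $A_iC_j$, $B_iC_j$) meet the third side in a point of $\cB$ by the general position hypothesis and thus each require at least one midpoint. These incidences pin down the coordinates of $P_2,\ldots,P_5$ as rational functions of $m,x,x',y$; imposing all of them produces an overdetermined system of polynomial equations whose solutions can be shown to violate the general position assumption (for instance, forcing two holes to coincide, or a midpoint to land on a side of the triangle), precisely as in the endgame of Lemma \ref{lemma1}.

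The principal obstacle is the branching of the case analysis: here three holes and four midpoints are essentially free, whereas in Lemma \ref{lemma1} two of the holes and three of the midpoints were rigidified by the triple concurrency at $P_1$, so the extra parameter $x'$ together with the weaker concurrency produces a substantially bushier subcase tree. I would try to tame this by exploiting the residual projective symmetry that stabilises the triangle and $P_1=(1:1:1)$ --- namely, the swap of the two coordinate axes, together with the allowed swaps of the two holes on each side --- to identify symmetric subcases and cut down the distinct algebraic computations to a manageable list.
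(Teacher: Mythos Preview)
Your setup is sound and agrees with the paper up to relabeling: one normalises a midpoint $P_1$ on exactly two lines of~$\cL$, invoking Lemma~\ref{lemma1} to exclude three. Where you diverge is in the next step, and the divergence is costly.

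You propose to case-split on how the four remaining lines of~$\cL$ are distributed among $P_2,\ldots,P_5$. The paper does not do this. Its key observation is that for any hole $H$, the five lines joining $H$ to the opposite vertex and to the four holes on the other two sides are pairwise distinct (by general position) and each needs a midpoint, so they \emph{partition} $\{P_1,\ldots,P_5\}$. Applying this to the two holes on the side opposite the ``missing'' vertex --- in your labeling, to $C_1$ and $C_2$ --- one sees that $P_1$ lies on exactly one line of each pencil. Since $P_1\notin C_jC$ (Lemma~\ref{lemma1}) and $P_1\notin C_jA_1,\,C_jB_1$ (these would drag $C_j$ onto $AA_1$ or $BB_1$), one gets $P_1\in C_jA_2$ or $P_1\in C_jB_2$ for each $j$. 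After swapping $C_1\leftrightarrow C_2$ if necessary, this yields two equations that eliminate two of your four parameters $m,y,x,x'$. In the paper's labeling (with $P_1\in BB_1\cap CC_1$ and the pencils taken through $A_1,A_2$), this determines both holes at infinity in terms of $x,y$ alone.

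With only two free parameters, the subsequent analysis is short: one more hole--hole line locates $P_2$, a single binary split locates $P_3$, and each branch (the paper's Cases I and II) collapses to a two- or three-equation system with an immediate contradiction. Your proposed organisation around the leftover lines of~$\cL$ misses this parameter reduction; the ``bushier subcase tree'' you anticipate is not intrinsic to the problem but an artefact of not exploiting the hole-pencils first. The hole--hole incidences you list in your step~3 are exactly the right ingredients, but they should be used to constrain $P_1$ (and hence the holes) \emph{before} you start placing $P_2,\ldots,P_5$, not after.

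One small correction: in Lemma~\ref{lemma1} three holes, not two, are rigidified by the triple concurrency, so your parameter count in the comparison is off by one.
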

\begin{proof}
As before, we may assume that $P_1=(1:1:1)$ is a midpoint and $B_1=(0:1:1)$, $C_1=(1:0:1)$ are holes.
Let $B_2=(0:y:1)$, $C_2=(x:0:1)$ denote the other two affine holes; note that $\{x,y\}\cap\{0,1\}=\emptyset$. 
Consider $A_1=(1:m:0)$, one of the holes at the line at infinity. There are different midpoints on each of the lines
$A_1C_2$, $A_1C_1$, $A_1B_2$, $A_1B_1$, $A_1A$, so $P_1$ is incident with one of these lines. Clearly, $P_1\notin A_1C_1$, $P_1\notin A_1B_1$ and, because of Lemma \ref{lemma1}, $P_1 \notin A_1A$, thus $P_1$ is incident with $A_1C_2=[-m:1:mx]$ or with $A_1B_2$. 
Now consider $A_2=(1:m':0)$, the other hole at the line at infinity (see Figure \ref{fig2} right). In the same way we obtain $P_1\in A_2C_2$ or $P_1\in A_2B_2=[m':-1:y]$. Thus either $P_1\in A_1C_2\cap A_2B_2$ or $P_1\in A_2C_2\cap A_1B_2$. With a suitable (re)labeling of $A_1$ and $A_2$, we obtain
\begin{eqnarray*}
P_1&=&B_1B\cap C_1C\cap A_1C_2 \cap A_2B_2,\\
A_1&=&(1-x:1:0),\\
A_2&=&(1:1-y:0).
\end{eqnarray*}
The line $C_1B_1=[1:1:-1]$ has to be incident with one of the midpoints. Recall that each of the midpoints is incident with exactly one of the lines
$A_1B_1$, $A_1B_2$, $A_1A$, $A_1C_2$, $A_1C_1$. As $P_1\notin C_1B_1$ and $P_1\in A_1C_2$, the midpoint on $C_1B_1$ is either on $A_1B_2=[1:x-1:y-xy]$ or on $A_1A$.
Similarly, each of the midpoints is incident with exactly one of the lines $A_2B_1$, $A_2B_2$, $A_2A$, $A_2C_2$, $A_2C_1$, and $C_1B_1$ shares a midpoint either with $A_2C_2$ or with $A_2A$.
Since $C_1B_1$ is incident with exactly one midpoint, $A_1A \cap C_1B_1$ and $A_2A \cap C_1B_1$ cannot be both midpoints and hence
one of $A_1B_2\cap C_1B_1$ and $A_2C_2 \cap C_1B_1$ is a midpoint. After possibly interchanging the role of the $X$ and $Y$ axes,
we may assume that \[P_2:=A_1B_2\cap C_1B_1=(1-x-y+xy:1+y-xy:2-x)\] is a midpoint.
Now take the line $B_1A_2=[y-1:1:-1]$; note that neither $P_1$ nor $P_2$ lies on it, thus it must contain a different midpoint $P_3$. Consider the lines $A_1B_1$, $A_1B_2$, $A_1A$, $A_1C_1$, $A_1C_2$. A similar argument as before shows that $P_3$ is either on $A_1A=[1:x-1:0]$ or on $A_1C_1=[1:x-1:-1]$. We distinguish two cases according to these two possibilities.

\texttt{Case I: $P_3=B_1A_2 \cap A_1A=(1-x:1:x+y-xy)$.} Then looking around from $B_1$ and $A_1$, we see that the remaining two midpoints must block $A_1B_1$, $B_1C_2=[1:x:-x]$ and $A_1C_1$, hence
\[P_4:=B_1C_2\cap A_1C_1=(2x-x^2:x-1:1)\] is also a midpoint.

Consider the lines $A_1B_1$ and $A_2C_1$. The first four midpoints cannot be incident with them, thus $P_5:=A_1B_1\cap A_2C_1$ is a midpoint. But then $P_5\notin A_2C_2$, hence $P_2\in A_2C_2$ must hold; therefore $P_2\notin A_2A$, so $P_4\in A_2A$ follows. The line $B_2C_1$ can be blocked only by $P_3$, which yields $P_3\notin B_2C_2$, and thus $P_5\in B_2C_2$; consequently, $P_5\notin B_2B$, hence $P_4\in B_2B=[0:1:-y]$. This gives $x=y+1$. On the other hand, $P_4\in A_2A=[y-1:1:0]$ and $P_3\in B_2C_1=[y:1:-y]$ give 
\begin{align*}
-1-x+x^2+2xy-x^2y&=0,\\
-1-y+2xy+y^2-xy^2&=0,
\end{align*}
respectively. Subtracting these two equations from each other yields $(x-1)(y-1)(x-y)=0$. As $x,y\neq1$, $x=y$ must hold, in contradiction with $x=y+1$. 

\texttt{Case II: $P_3=B_1A_2 \cap A_1C_1=(x-2:y-2:xy-x-y)$.} 
We see that the remaining two midpoints must block $A_1B_1$, $A_1A$ and $B_1C_2=[1:x:-x]$, hence
\[P_4:=A_1A\cap B_1C_2=(x-x^2:x:1)\] is also a midpoint.
As $A_1B_1$ is not blocked by the first four midpoints, $P_5\in A_1B_1$.
The line $A_2C_1$ can be blocked by $P_4$ and $P_5$ only; $A_2A$ and $A_2C_2$ by $P_2$ and $P_5$ only. Thus $P_5\in A_2C_1$ is impossible, since $P_2$ cannot block both $A_2A$ and $A_2C_2$; thus $P_4\in A_2C_1=[y-1:1:1-y]$ and hence $P_5\in B_2C_1$ and $P_3\in B_2C_2=[y:x:-xy]$ follow. Consequently, $B_2B=[0:-1:y]$ can be blocked only by $P_4$.

The incidence $P_4\in B_2B$ gives $x=y$. Then $P_4=(x-x^2:x:1)\in A_2C_1=[x-1:1:1-x]$ gives $x^3-2x^2=1-x$ and $P_3=(x-2:x-2:x^2-2x)\in B_2C_2=[x:x:-x^2]=[1:1:-x]$ gives $x^3-2x^2=2x-4$. It follows that $q$ must be odd. From $1-x=2x-4$ we get $x=5/3$; with this, $x^3-2x^2=1-x$ holds if and only if $p=7$. Consider now the lines $A_2A$, $A_2C_2$ and $C_2C$. These must be blocked by $P_2$ and $P_5$; consequently, either $P_2\in A_2C_2$ or $P_2=A_2A\cap C_2C$. Under $p=7$ and $x=y=5/3=4$, it is easy to see that neither $P_2=(2:3:5)\in A_2C_2=[3:1:2]$, nor $P_2\in A_2A=[4:1:0]$.
\end{proof}

By Proposition \ref{genholes}, we see that there must be at least one triplet among the holes that is collinear, which case is to be treated in the next subsection.

\subsection{Holes with collinear triplets}

With the general notation of the entire section, we assume this time that the holes $A_1$, $B_1$ and $C_1$ are collinear, and $\ell$ denotes the line joining them. 
As the collineation group of $\PG(2,q)$ is transitive on the quadruples of four lines in general position, we may assume that $\ell=[1:-1:1]$, $\ell_A=[0:0:1]$, $\ell_B=[1:0:0]$ and $\ell_C=[0:1:0]$. 
In this setting, for some $x,y,m\in\F_q$, we have
\begin{align*}
A&= (0:0:1), & B&= (1:0:0), & C&= (0:1:0), \\
A_1&= (1:1:0), & B_1&= (0:1:1), & C_1&=(-1:0:1), \\
A_2&= (1:m:0), & B_2&= (0:y:1), & C_2&= (x:0:1).
\end{align*}
Clearly, $x\notin\{0,-1\}$ and $\{y,m\}\cap\{0,1\}=\emptyset$. Note that with a suitable collineation, $\ell_A$, $\ell_B$ and $\ell_C$ can be arbitrarily permuted while fixing $\ell$.

\begin{lemma}\label{2collin}
If there are more than one collinear triplets among the holes, then these triplets have to be disjoint.
\end{lemma}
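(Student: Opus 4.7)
The plan is to rule out both the case $|T_1 \cap T_2| = 2$ and the case $|T_1 \cap T_2| = 1$, since a collinear triple of holes necessarily contains one hole from each side (two holes on one side would force the third to lie on that side too, impossible). Both triples are thus indexed by $\{1,2\}^3$, and I may assume $T_1 = \{A_1, B_1, C_1\}$ lies on $\ell = [1:-1:1]$ in the canonical setup. If $|T_1 \cap T_2| = 2$, then WLOG $T_2 = \{A_2, B_1, C_1\}$, and the collinearity determinant evaluates to $1-m$, which forces $m = 1$ and hence $A_1 = A_2$, a contradiction; the other two 2-shared-hole subcases are identical by the symmetry $A \leftrightarrow B \leftrightarrow C$.

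If $|T_1 \cap T_2| = 1$, then using the $S_3$-subgroup of collineations that fix both the triangle and $\ell$ setwise (realised by a coordinate-axis permutation composed with a sign-correcting diagonal matrix), I may assume the shared hole is $C_1$, so $T_2 = \{A_2, B_2, C_1\}$. Collinearity then forces $m = y$, and $T_2$ lies on $\ell' = [y:-1:y]$. Since $\ell \cap \ell' = C_1 \notin \cB$, the at-least-two midpoints required on each of $\ell, \ell'$ are pairwise disjoint, so at most one of the five midpoints lies off $\ell \cup \ell'$; call this one $M^*$ if it exists.

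Next I would enumerate the six lines carrying exactly two holes but lying on no side and on neither $\ell$ nor $\ell'$; a quick check shows none passes through a vertex, so each needs a midpoint. Direct intersection calculations show that $A_1B_2$ and $A_2B_1$ meet $\ell \cup \ell'$ only at holes ($A_1, B_2$ and $B_1, A_2$ respectively), so each must be blocked by $M^*$; hence $M^*$ exists and $M^* = A_1B_2 \cap A_2B_1 = (1:y+1:1)$. The requirement $M^* \notin \ell_C$ yields $y \neq -1$. The four remaining 2-hole lines $A_1C_2, A_2C_2, B_1C_2, B_2C_2$ each meet $\ell$ or $\ell'$ in a unique midpoint-candidate, pinning down the four midpoints on $\ell \cup \ell'$ as explicit rational expressions in $x, y$.

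Finally, the lines $AA_1$ and $AA_2$ must each contain a midpoint. Testing the five now-explicit midpoints against these two lines, $AA_1$ can only be blocked if $y(2x+1) = x$ and $AA_2$ only if $xy = 2x + 1$. Multiplying gives $x^2 = (2x+1)^2$, i.e.\ $(x+1)(3x+1) = 0$. Since $x \neq -1$ (else $C_1 = C_2$), we need $3x + 1 = 0$: in characteristic $2$ or $3$ this again reduces to $x = -1$, a contradiction, and otherwise $x = -1/3$ and $y = -1$, contradicting $y \neq -1$. The main obstacle is the careful bookkeeping that pins down the five midpoints uniquely via the 2-hole lines; once that is settled, the final contradiction from the two vertex lines $AA_1, AA_2$ is a short algebraic computation with the characteristic cases $2$ and $3$ handled in passing.
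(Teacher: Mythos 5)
Your skeleton is sound, and the computations check out where they apply: with $T_2=\{A_2,B_2,C_1\}$ one indeed gets $m=y$, $\ell'=[y:-1:y]$, $\ell\cap\ell'=C_1\notin\cB$, at most one midpoint $M^*$ off $\ell\cup\ell'$, $M^*=A_1B_2\cap A_2B_1=(1:y+1:1)$, the four forced midpoints $Q_1=A_1C_2\cap\ell'$, $Q_2=A_2C_2\cap\ell$, $Q_3=B_1C_2\cap\ell'$, $Q_4=B_2C_2\cap\ell$, and the endgame $y(2x+1)=x$, $xy=2x+1$, $(x+1)(3x+1)=0$. But there is a genuine gap: your pinning-down step silently assumes that none of the two-hole lines carries a \emph{third} hole, and the contradiction hypothesis does not grant this, since further collinear triples among the holes are allowed. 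Concretely, $C_2\in A_1B_2$ exactly when $x=-y$, and $C_2\in A_2B_1$ exactly when $xy=-1$; on these loci your claims fail (for $x=-y$ the candidate $Q_1$ degenerates to the hole $B_2$ and $Q_4$ to $A_1$), and your two ``only if'' statements use precisely this genericity: $Q_4\in AA_1$ iff $x+y=0$ and $Q_3\in AA_2$ iff $xy=-1$. This cannot be waved away, because in characteristic $2$ your final system collapses to $x=y$, i.e., lands exactly on the degenerate locus. Fortunately your own counting kills it: if, say, $A_1,B_2,C_2$ are collinear, that line meets the sides only in holes and meets $\ell$, $\ell'$ only at the holes $A_1$, $B_2$, so it needs two midpoints off $\ell\cup\ell'$, contradicting ``at most one''; the same works for $A_2,B_1,C_2$. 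The paper proves exactly these non-collinearity statements (in its relabelled configuration) before pinning midpoints; you must too.

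A comparison worth noting: you overlooked a third line forced through $M^*$, namely $C_1C=[1:0:1]$, which meets both $\ell$ and $\ell'$ at the shared hole $C_1$, so its midpoint must be $M^*$; then $(1:1+y:1)\in[1:0:1]$ gives $2=0$. This is the paper's (relabelled) first strike: it intersects its three forced lines to get $2(x+1)=0$, disposing of all odd $q$ at once, and only then runs a bespoke even-$q$ analysis containing the non-collinearity checks you skipped. So your route, once patched, is correct but unnecessarily long for odd $q$; using $C_1C$ would confine your five-midpoint bookkeeping to $2\mid q$, where your $AA_1$/$AA_2$ computation (with the degeneracy patch) finishes the job.
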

\begin{proof}
Suppose to the contrary that there is another collinear triplet among the holes which has a common point with $\{A_1,B_1,C_1\}$. We may assume that this triplet is $\{A_2,B_1,C_2\}$. Let $\ell'$ be the line joining these holes. Then both $\ell$ and $\ell'$ contain at least two midpoints and hence there is at most one midpoint $P_1$ which is not contained in $\ell\cup \ell'$. Since $A_2\in B_1C_2=[1:x:-x]$, we have $A_2=(-x:1:0)$.
It is easy to see that each of the lines $B_1B=[0:1:-1]$, $A_1C_2=[-1:1:x]$ and $C_1A_2=[1:x:1]$ must be blocked by $P_1$. 
Then $P_1=B_1B\cap A_1C_2=(x+1:1:1)$; furthermore, $P_1\in C_1A_2$ gives $2x=-2$, a contradiction unless $2\mid q$. 

Suppose now $2\mid q$. Let the midponits on $\ell$ be $P_2$ and $P_3$, and let $P_4$, $P_5$ be the midpoints on $\ell'$. 
Note that $A_1$, $B_2$ and $C_2$ cannot be collinear as, if they were, there should be two midpoints on the line joining them, but none of $P_2$, $P_3$, $P_4$, or $P_5$ can be on it.
Then $C_2C=[1:0:x]$ and $C_2B_2=[y:x:xy]$ can be blocked only by $P_2$ and $P_3$; thus, without loss of generality we may assume that $P_2 = C_2C\cap \ell=(x:x+1:1)$ and $P_3=C_2B_2\cap \ell=(x(y+1):y(x+1):x+y)$. 
Similarly as above, we can argue that $A_2$, $B_2$ and $C_1$ cannot be collinear. This yields $P_1\notin A_2B_2$. 
Since $A_2C_2\cap B_1B_2 = B_1$, $A_2$, $B_2$ and $C_2$ are not collinear, hence $P_3\notin A_2B_2$. Clearly, $P_4$ and $P_5$ are not on $A_2B_2$; thus $P_2\in A_2B_2=[1:x:xy]$ must hold. This yields $x(x+y)=0$, thus $x=y$; hence $P_3=(1:1:0)=A_1$, a contradiction.
\end{proof}

Let us call $A_1$, $B_1$ and $C_1$ `collinear holes' in the sequel. The line $\ell$ must contain two midpoints; let us denote the other three midpoints by $P_1$, $P_2$ and $P_3$, and those two on $\ell$ by $P_4$ and $P_5$. 
Consider a collinear hole, say, $A_1$. Then the lines $A_1A$, $A_1B_2$ and $A_1C_2$ are pairwise distinct by Lemma \ref{2collin}, and they must be blocked by $P_1$, $P_2$ and $P_3$ (in some order). Hence, using the same observation for all the three collinear holes, we see that for $i=1,2,3$, $P_i$ is incident with exactly one line of each row in the following table, and each of the nine lines is incident with exactly one of $P_1$, $P_2$ and $P_3$:
\begin{align*}
A_1A&= [1:-1:0], & A_1B_2&= [1:-1:y],   & A_1C_2&= [1:-1:-x], \\
B_1B&= [0:1:-1], & B_1C_2&= [1:x:-x], & B_1A_2&= [m:-1:1], \\
C_1C&= [1:0:1],  & C_1A_2&= [m:-1:m], & C_1B_2&= [y:-1:y].
\end{align*}
We will frequently use these observations and coordinates without explicitly referring to them.


\begin{proposition}\label{twoofL}
None of $A_1A\cap B_1B$, $B_1B\cap C_1C$, $C_1C\cap A_1A$ can be a midpoint. 
\end{proposition}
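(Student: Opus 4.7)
The plan is to argue by contradiction. Up to relabeling of the three midpoints off $\ell$, one may assume that $P_1$ is the intersection in question. The involution $\phi:(X{:}Y{:}Z)\mapsto(Z{:}Y{:}X)$ preserves $\ell=[1{:}-1{:}1]$, fixes $C$ and $C_1$, interchanges $A\leftrightarrow B$ and $A_1\leftrightarrow B_1$, and induces the parameter change $(x,y,m)\mapsto(1/x,m,y)$. Under $\phi$ the point $A_1A\cap B_1B=(1:1:1)$ is fixed, while $B_1B\cap C_1C=(-1:1:1)$ and $C_1C\cap A_1A=(1:1:-1)$ are interchanged; hence it suffices to derive a contradiction in the two cases $P_1=(1:1:1)$ and $P_1=(-1:1:1)$.

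In the first case, $P_1$ already lies on a row-$1$ and a row-$2$ line of the $3\times 3$ table, so the incidence pattern forces $P_1$ to lie on exactly one row-$3$ line. Direct substitution shows that $(1:1:1)\in C_1C$ iff $q$ is even, $(1:1:1)\in C_1A_2$ iff $m=1/2$, and $(1:1:1)\in C_1B_2$ iff $y=1/2$. A second application of $\phi$, which fixes $(1:1:1)$ and swaps $m\leftrightarrow y$, identifies the last two subcases, so one may assume either $q$ is even with $P_1\in C_1C$, or $m=1/2$ with $P_1\in C_1A_2$. An entirely analogous analysis in the second case $P_1=(-1:1:1)$ yields three subcases: $q$ even with $P_1\in A_1A$, $y=2$ with $P_1\in A_1B_2$, or $x=-2$ with $P_1\in A_1C_2$.

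Within each subcase the remaining six lines of the table split into three pairs (one per row), and $\{P_2,P_3\}$ must be a system of representatives for these pairs; this leaves at most four explicit configurations for the unordered pair $(P_2,P_3)$, each of which is computed as the intersection of a triple of lines exactly as in the proof of Proposition~\ref{genholes}. The contradiction in each configuration is then extracted from the six further lines outside the table that must also meet $\cB$ in at least two points: the three axes $A_2A$, $B_2B$, $C_2C$ through the non-collinear holes, and the three lines $A_2B_2$, $B_2C_2$, $A_2C_2$ joining the non-collinear holes pairwise. Each such blocking condition forces one of the five midpoints (either one of $P_1,P_2,P_3$ or one of the two midpoints $P_4,P_5$ on $\ell$) to lie on the line in question, and so translates into a linear relation in $x,y,m$; together with the subcase constraint these relations either force one of the forbidden degeneracies $x\in\{0,-1\}$, $y\in\{0,1\}$, $m\in\{0,1\}$, or force two of the five midpoints to coincide. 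The principal obstacle is the bookkeeping across the roughly twenty combined (subcase, configuration) branches, but I expect each individual branch to collapse, after substitution of the explicit coordinates, to a short linear or quadratic obstruction that is excluded by the standing assumptions on the parameters.
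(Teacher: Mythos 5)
Your skeleton is essentially the paper's, just with less symmetry squeezed out at the start: the paper uses the full group of collineations permuting $\ell_A$, $\ell_B$, $\ell_C$ while fixing $\ell$ to reduce to the single position $P_1=A_1A\cap C_1C$, after which the table forces $P_2=A_1B_2\cap C_1A_2$ and $P_3=A_1C_2\cap C_1B_2$, and the case split is over which of $P_1,P_2,P_3$ blocks $B_1B$, $B_1C_2$, $B_1A_2$; your five (subcase, configuration) branches are exactly the images of these three cases under that symmetry, and the hole-intersection constraints (e.g.\ $A_1B_2\cap C_1B_2=B_2$, $B_1A_2\cap C_1A_2=A_2$) in fact cut your ``at most four'' configurations down to a single one per subcase. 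The trouble lies in your finishing mechanism, in two respects. First, the six non-table lines are a detour: in each branch the incidences of $P_2$ and $P_3$ with their remaining table lines, together with the subcase constraint, already form the (inconsistent) polynomial system -- this is all the paper uses. Worse, your claim that each extra blocking condition ``translates into a linear relation in $x,y,m$'' is false whenever the blocking midpoint is $P_4$ or $P_5$: those lie on $\ell$ with unconstrained coordinates, so such a condition constrains $P_4$, $P_5$ rather than $(x,y,m)$ (note also that $A_2B_2$ may pass through $C_2$, since the triplet $\{A_2,B_2,C_2\}$ is disjoint from $\{A_1,B_1,C_1\}$ and hence not excluded by Lemma \ref{2collin}, which changes that line's blocking requirement).

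The genuine gap, however, is characteristic $3$. In the odd-characteristic branches the system does not collapse to one of your listed degeneracies or to coinciding midpoints; it collapses to an identity of the form $3(y-1)^2=0$ (or $3(x+1)^2=0$), which in characteristic $3$ is vacuously true and is excluded by none of $x\notin\{0,-1\}$, $y,m\notin\{0,1\}$. The paper kills precisely these branches by invoking Corollary \ref{0mod3} (three $(q-1)$-secants force $q\equiv1\pmod 3$), which rests on the Segre-type Lemma \ref{trisecants} and Corollary \ref{Osecants} -- a global input about the long secants that cannot be recovered from the local incidence bookkeeping your plan describes. Since your proposal defers all the actual algebra (``I expect each branch to collapse\dots'') and the anticipated obstructions are exactly the ones that fail in characteristic $3$, the argument as planned would not close without importing Corollary \ref{0mod3} or an equivalent substitute.
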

\begin{proof}
Suppose the contrary. Without loss of generality we may assume that $P_1=A_1A \cap C_1C=(-1:-1:1)$ is a midpoint. There must be a midpoint on $A_1B_2$, which cannot be on $C_1C$ or $C_1B_2$, so (with suitable relabeling) $P_2=A_1B_2\cap C_1A_2=(y-m:m(y-1):m-1)$ is a midpoint and, similarly, $P_3=A_1C_2\cap C_1B_2=(x+y:y(x+1):1-y)$ is also midpoint.
These three must block the lines $B_1B$, $B_1C_2$, $B_1A_2$. Note that $P_2\notin B_1A_2$ and $P_3\notin B_1C_2$.\\
\texttt{Case I: $P_1\in B_1B$.} That is, $(-1:-1:1)\in[0:1:-1]$, which happens if and only if $q$ is even. This immediately leads to $P_2=A_1B_2\cap B_1C_2\cap C_1A_2$ and $P_3=A_1C_2\cap B_1A_2\cap C_1B_2$. The arising equations yield \begin{equation}
m+x+y=mxy \mbox{\: and \:} yx+mx+ym=1. \label{eqsymm}
\end{equation}
From these we obtain $m(1+xy)=(x+y)$ and $m(x+y)=1+xy$,
hence $m^2(1+xy)=(x+y)m=1+xy$, so either $m=1$ or $xy=1$. As the first option is forbidden, applying symmetry arguments in \eqref{eqsymm} we obtain $xy=xm=ym=1$, whence $xym=x=y=m=1$ follows, a contradiction.\\
\texttt{Case II: $P_1\in B_1C_2$.} Then $q$ is odd and $x=-\frac12$; furthermore, $P_2\in B_1B$ and $P_3\in B_1A_2$. These give $m=1/(2-y)$ and $m(x+y)-(x+2)y+1=0$, which lead to $3(y-1)^2=0$, thus either $3\mid q$, which is impossible by Corollary \ref{0mod3}, or $y=1$, a contradiction.\\
\texttt{Case III: $P_1\in B_1A_2$.} Then $m=2$, $P_3\in B_1B$ and $P_2\in B_1C_2$. These give $x=(1-2y)/y$ and $2xy-3x+y-2=0$, which lead to $3(y-1)^2=0$, but this is still impossible.
\end{proof}

By Proposition \ref{twoofL}, we may assume in the sequel that $P_1\in A_1A$, $P_2\in B_1B$, $P_3\in C_1C$.

\begin{proposition}\label{case2}
None of $A_1A\cap B_1A_2$, $A_1A\cap C_1A_2$, $B_1B\cap A_1B_2$, $B_1B\cap C_1B_2$, $C_1C\cap A_1C_2$, $C_1C\cap B_1C_2$ can be a midpoint.
\end{proposition}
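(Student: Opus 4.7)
The plan is to assume for contradiction that one of the six listed intersections is a midpoint, and derive a contradiction from the nine-line incidence table. First, I note that $P_4,P_5\in\ell$ cannot lie on any of the axes $A_1A$, $B_1B$, $C_1C$, since these lines meet $\ell$ at the holes $A_1$, $B_1$, $C_1$ respectively. Combined with Proposition~\ref{twoofL}, this forces any listed intersection, if a midpoint, to coincide with the midpoint on the corresponding axis (so $A_1A\cap B_1A_2 = P_1$, $B_1B\cap A_1B_2 = P_2$, etc.).

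Next, the projectivity represented by $M=\left(\begin{smallmatrix}0&0&1\\-1&0&0\\0&-1&0\end{smallmatrix}\right)$ cyclically permutes $A\to B\to C\to A$, $A_1\to B_1\to C_1\to A_1$, $A_2\to B_2\to C_2\to A_2$ (the last three as sets of holes), and fixes $\ell$; hence it preserves the standard setup and cyclically permutes the nine lines of the table column-by-column. Consequently the six cases fall into two orbits of three: $\{A_1A\cap B_1A_2,\,B_1B\cap C_1B_2,\,C_1C\cap A_1C_2\}$ and $\{A_1A\cap C_1A_2,\,B_1B\cap A_1B_2,\,C_1C\cap B_1C_2\}$. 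It therefore suffices to rule out one representative from each orbit.

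For the first orbit I would take $P_1=A_1A\cap B_1A_2=(1:1:1-m)$. Of the two candidates for $P_1$'s third-row assignment, the option $P_1\in C_1A_2$ is excluded because $B_1A_2\cap C_1A_2=A_2$, and $A_2\in A_1A$ would force $m=1$. Hence $P_1\in C_1B_2$, yielding $y(2-m)=1$. The remaining row assignments then force $P_2=B_1B\cap C_1A_2=(1-m:m:m)$ and $P_3=C_1C\cap B_1C_2=(x:-1-x:-x)$, leaving two subcases for the first-row assignments of $P_2,P_3$. In one, $P_2\in A_1B_2$ gives $y=(2m-1)/m$, which combined with $y(2-m)=1$ reduces to $(m-1)^2=0$, contradicting $m\neq1$. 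In the other, $P_2\in A_1C_2$ and $P_3\in A_1B_2$ yield $x=(1-2m)/m$ and $y=(1+2x)/x$; substituting into $y(2-m)=1$ again collapses to $(m-1)^2=0$.

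For the second orbit the analogous representative is $P_1=A_1A\cap C_1A_2=(m:m:1-m)$: the same $A_2$-type argument rules out $P_1\in B_1A_2$, forcing $P_1\in B_1C_2$ and $x=m/(1-2m)$; the row constraints then give $P_2=B_1B\cap C_1B_2=(1-y:y:y)$ and $P_3=C_1C\cap B_1A_2=(1:m-1:-1)$. One subcase immediately produces $(y-1)^2=0$ from $P_2\in A_1B_2$, and the other collapses to $(m-1)^2=0$ after combining $x=(1-2y)/y$ from $P_2\in A_1C_2$, $y=2-m$ from $P_3\in A_1B_2$, and $x=m/(1-2m)$. The main obstacle is organizing the two-layer case split (row-2 or row-3 assignment of $P_1$ times row-1 assignments of $P_2,P_3$); the algebra in each leaf is elementary and always collapses to a perfect square at a forbidden value of the parameters.
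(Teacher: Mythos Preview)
Your approach is essentially the paper's: reduce by symmetry, use the nine-line table to pin down $P_1,P_2,P_3$, then derive a contradiction from the parameter constraints. Two remarks, one stylistic and one substantive.

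First, you use only the cyclic permutation $A\to B\to C$, but the full symmetric group on $\{\ell_A,\ell_B,\ell_C\}$ (fixing $\ell$) is available; a transposition such as $B\leftrightarrow C$ sends $A_1A\cap C_1A_2$ to $A_1A\cap B_1A_2$, so the two orbits merge and a single representative suffices. This is how the paper proceeds.

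Second, and this is a genuine gap: in your first orbit's first subcase ($P_2\in A_1B_2$, hence $P_3\in A_1C_2$), the algebra actually yields $2(m-1)^2=0$, not $(m-1)^2=0$. Your conclusion is vacuous in characteristic~$2$, which is not excluded here (e.g.\ $q=16$ satisfies $q\equiv1\pmod3$). The clean repair is to use the $P_3$ condition instead: $P_3=(x:-1-x:-x)\in A_1C_2=[1:-1:-x]$ gives $x^2+2x+1=(x+1)^2=0$, i.e.\ $x=-1$, a contradiction in any characteristic. Equivalently, $A_1C_2\cap B_1C_2=C_2$ and $C_2\notin C_1C$ (since $x\neq-1$), so $P_3\notin A_1C_2$ and this subcase simply does not occur; this is exactly how the paper forces the row assignments without any subcase split. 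Similarly, your ``$(m-1)^2=0$'' in the other subcases is really $3(m-1)^2=0$; you must invoke Corollary~\ref{0mod3} (three $(q-1)$-secants force $q\equiv1\pmod3$) to dispose of characteristic~$3$, as the paper does.
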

\begin{proof}
Suppose the contrary. Applying a suitable collineation permuting $\ell_A$, $\ell_B$ and $\ell_C$ (recall that we may permute the letters $A$, $B$ and $C$ in an arbitrary fashion), we may assume that $A_1A\cap B_1A_2=P_1$ is a midpoint. Then, necessarily, $B_1C_2$ and thus $A_1B_2$ are blocked by $P_3$ and, similarly, $C_1A_2$ can only be blocked by $P_2$; summing up, we get $P_1=A_1A\cap B_1A_2\cap C_1B_2$, $P_2=B_1B\cap C_1A_2\cap A_1C_2$, and $P_3=C_1C\cap A_1B_2\cap B_1C_2$. From these we obtain $P_1=(1:1:1-m)$ and $y(2-m)=1$, $P_2=(x+1:1:1)$ and $(x+2)m=1$, and $P_3=(-1:y-1:1)$ and $x(y-2)=1$. Substituting $m=1/(x+2)$ and $y=2 + 1/x$ into $y(2-m)=1$ leads to $3(x+1)^2=0$, thus 
either $3\mid q$, which is yet again impossible by Corollary \ref{0mod3}, or $x=-1$, a contradiction.
\end{proof}

It follows easily from Propositions \ref{twoofL} and \ref{case2} that the only possibility left is $P_1=A_1A\cap B_1C_2\cap C_1B_2$, $P_2=B_1B\cap C_1A_2\cap A_1C_2$, $P_3=C_1C\cap A_1B_2\cap B_1A_2$. From these, simple calculations yield that $q$ is odd, $P_1=(y:y:1-y)$, $P_2=(x+1:1:1)$, $P_3=(-1:y-1:1)$, and $2xy+y-x=0$, $mx+2m-1=0$, $2-m-y=0$. Substituting $m=2-y$ into the second one we obtain $y=(2x+3)/(x+2)$ which, after substituting it into the first one, gives $3(x+1)^2=0$. This contradicts either Corollary \ref{0mod3} or $x\neq -1$.
With this, we have finished the proof of Theorem \ref{Hillconjthm}.

\section{Constructions of double blocking sets of size $3q-1$}
\label{constructions}

With the help of a standard PC and the MIP solvers \cite{glpk,gurobi}, we found some constructions of double blocking sets of size $3q-1$ in $\PG(2,q)$, $13\leq q\leq 43$, $q\not\equiv2\pmod 3$. We were looking for examples that admit two $(q-1)$-secants. Applying a suitable collineation, we may assume that these long secants are the $X$ and $Y$ axes, and the holes on them are $(1:0:1)$, $(1:0:0)$, $(0:1:1)$, and $(0:1:0)$. Hence we only give the coordinates of the remaining $q+2$ points. As an additional information, we also give the distribution of the secant lengths; to this end, let $n_t$ denote the number of $t$-secants with respect to the pointset under consideration. 
Sometimes, in order to fasten the calculations, we assumed the pointset to be $X$-$Y$ symmetric; that is, the collineation interchanging $(1:0:1)$ with $(0:1:1)$ and $(1:0:0)$ with $(0:1:0)$ (denoted by $T$ in Section \ref{structure}) should fix the double blocking set. Note that by Proposition \ref{notsosymm}, a construction admitting a nontrivial symmetry cannot have another nontrivial symmetry, and so it can be transformed into one which is $X$-$Y$ symmetric.
We also made use of the other structural properties derived in Section \ref{structure}, which remarkably reduced the necessary computer time. Unless we explicitly state differently in the notes, the trisecants through the origin in case of $q\equiv 1\pmod 3$ have slopes $-1$, $-\mu$ and $-\mu^2$ (where $\mu^2+\mu+1=0$; c.f.~Lemma \ref{trisecants} and Corollary \ref{Osecants}); in other words, the parameter $s$ of the example is $-1$. Note that for an example admitting the $X$-$Y$ symmetry, $s=\pm1$ necessarily holds as the symmetry implies $\{m,m'\}=\{1/m,1/m'\}$, and hence $-s^3=mm'=\pm1$ (c.f.~Proposition \ref{relations}). In many, but not all, of our examples $m=\mu$, $m'=\mu^2$.

\subsection{$q=13$}

\begin{tabular}{rl}
Points: & (points on the $X$ and $Y$ axes are not displayed)\\
& \begin{tabular}{ccccc}
$(1:1:1)$ & $(1:12:1)$ & $(2:8:1)$ & $(3:7:1)$ & $(4:3:1)$\\
$(5:9:1)$ & $(6:10:1)$ & $(7:4:1)$ & $(8:2:1)$ & $(9:5:1)$\\
$(10:11:1)$ & $(11:6:1)$ & $(12:1:1)$ & $(1:3:0)$ & $(1:9:0)$
\end{tabular}
\end{tabular}

\begin{tabular}{rl}
Secant distribution: & (the number $n_t$ of $t$-secants is present iff $t\geq 3$ and $n_t>0$)\\
& \begin{tabular}{r|ccccccc}
$t$ &
12 & 8 & 7 & 6 & 5 & 4 & 3 \\
\hline
$n_t$ &
2 & 1 & 1 & 4 & 10 & 19 & 51
\end{tabular}
\end{tabular}

\begin{tabular}{rl}
Notes: & The third roots of unity are $1,3,9$.\\
& Up to projective equivalence, this is the only example admitting two $(q-1)$-secants.\\
& There is no example having a symmetry.\\
& This example is different from the one published in \cite{BKW}, as the longest secants to that\\& one are $10$-secants.
\end{tabular}

\subsection{$q=16$}

Let $\omega$ be a primitive element of $\F_{16}$ which has minimal polynomial $x^4+x+1$ over $\F_2$. 

\begin{tabular}{rl}
Points: & (points on the $X$ and $Y$ axes are not displayed)\\
& \begin{tabular}{ccccc}
$(1:\omega^{14}:1)$ & $(1:\omega^7:1)$ & $(\omega:\omega^{11}:1)$ & $(\omega^2:\omega^3:1)$ & $(\omega^3:\omega^5:1)$\\
$(\omega^4:\omega:1)$ & $(\omega^5:\omega^{12}:1)$ & $(\omega^6:\omega^9:1)$ & $(\omega^7:\omega^{13}:1)$ & $(\omega^8:\omega^2:1)$\\
$(\omega^9:\omega^6:1)$ & $(\omega^{10}:\omega^{10}:1)$ & $(\omega^{11}:1:1)$ & $(\omega^{12}:\omega^8:1)$ & $(\omega^{13}:1:1)$ \\
$(\omega^{14}:\omega^4:1)$ & $(1:\omega^8:0)$ &  $(1:\omega^{13}:0)$
\end{tabular}
\end{tabular}

\begin{tabular}{rl}
Secant distribution: & (the number $n_t$ of $t$-secants is present iff $t\geq 3$ and $n_t>0$)\\
& \begin{tabular}{r|ccccccc}
$t$ &
15 & 9 & 7 & 6 & 5 & 4 & 3 \\
\hline
$n_t$ &
2 & 1 & 1 & 3 & 20 & 37 & 69
\end{tabular}
\end{tabular}

\begin{tabular}{rl}
Notes: & The third roots of unity are $1,\omega^{5},\omega^{10}$.\\
& The trisecants through the origin have slopes $\omega^2$, $\omega^7$ and $\omega^{12}$ (so $s=\omega^6$).\\
& There is no example where $[1:1:0]$ is a triscant (i.e., $s=-1=1$ is impossible).\\
& Therefore, there is no example admitting a symmetry; \\
& and there is no example where $(1:\mu:0)$ and $(1:\mu^2:0)$ are both in $\cB$.
\end{tabular}

\subsection{$q=19$}

\subsubsection*{First example:}

\begin{tabular}{rl}
Points: & (points on the $X$ and $Y$ axes are not displayed)\\
& \begin{tabular}{cccccc}
$(1:4:1)$ & $(1:14:1)$ & $(2:18:1)$ & $(3:5:1)$ & $(6:13:1)$ & $(7:17:1)$ \\
$(4:1:1)$ & $(14:1:1)$ & $(18:2:1)$ & $(5:3:1)$ & $(13:6:1)$ & $(17:7:1)$ \\
$(8:16:1)$ & $(9:15:1)$ & $(10:11:1)$ & $(12:12:1)$ &  &  \\
$(16:8:1)$ & $(15:9:1)$ & $(11:10:1)$ &             & $(1:7:0)$ & $(1:11:0)$ \\
\end{tabular}
\end{tabular}

\begin{tabular}{rl}
Secant distribution: & (the number $n_t$ of $t$-secants is present iff $t\geq 3$ and $n_t>0$)\\
& \begin{tabular}{r|ccccccc}
$t$ &
18 & 11 & 7 & 6 &  5  & 4  &  3  \\
\hline
$n_t$ & 
2  & 1  &  2 & 4 & 22 & 57 &  111
\end{tabular}
\end{tabular}

\subsubsection*{Second example:}

\begin{tabular}{rl}
Points: & (points on the $X$ and $Y$ axes are not displayed)\\
& \begin{tabular}{cccccc}
$(1:4:1)$ & $(1:14:1)$ & $(2:12:1)$ & $(3:17:1)$ & $(5:5:1)$ & $(6:7:1)$ \\
$(4:1:1)$ & $(14:1:1)$ & $(12:2:1)$ & $(17:3:1)$ &           & $(7:6:1)$ \\
$(8:11:1)$ & $(9:13:1)$ & $(10:15:1)$ & $(16:18:1)$ &  &  \\
$(11:8:1)$ & $(13:9:1)$ & $(15:10:1)$ & $(18:16:1)$ & $(1:2:0)$ & $(1:10:0)$ \\
\end{tabular}
\end{tabular}

\begin{tabular}{rl}
Secant distribution: & (the number $n_t$ of $t$-secants is present iff $t\geq 3$ and $n_t>0$)\\
& \begin{tabular}{r|ccccccc}
$t$ &
18 & 8 & 7 & 6 &  5  & 4  &  3  \\
\hline
$n_t$ & 
2  & 2 & 2 & 5 & 28 & 39 &  128
\end{tabular}
\end{tabular}

\begin{tabular}{rl}
Notes: & The third roots of unity are $1,7,11$.\\
& Up to projective equivalence, there are no other examples;\\
& thus all examples admit a symmetry.
\end{tabular}

\subsection{$q=25$}

Let $\omega$ be a primitive element of $\F_{25}$ which has minimal polynomial $x^2-x-1$ over $\F_5$.

\begin{tabular}{rl}
Points: & (points on the $X$ and $Y$ axes are not displayed)\\
& \begin{tabular}{ccccc}
$(1:\omega^{4}:1)$ & $(1:\omega^{8}:1)$ & $(2:\omega^{10}:1)$ & $(3:\omega^{11}:1)$ & $(4:\omega^{2}:1)$ \\
$(\omega^{4}:1:1)$ & $(\omega^{8}:1:1)$ & $(\omega^{10}:2:1)$ & $(\omega^{11}:3:1)$ & $(\omega^{2}:4:1)$ \\
$(\omega:\omega^{13}:1)$ & $(\omega^{3}:\omega^{21}:1)$ & $(\omega^{5}:\omega^{14}:1)$ & $(\omega^{7}:\omega^{9}:1)$ & $(\omega^{15}:\omega^{20}:1)$ \\
$(\omega^{13}:\omega:1)$ & $(\omega^{21}:\omega^{3}:1)$ & $(\omega^{14}:\omega^{5}:1)$ & $(\omega^{9}:\omega^{7}:1)$ & $(\omega^{20}:\omega^{15}:1)$ \\
$(\omega^{16}:\omega^{17}:1)$ & $(\omega^{19}:\omega^{22}:1)$ & $(\omega^{23}:\omega^{23}:1)$ & & \\
$(\omega^{17}:\omega^{16}:1)$ & $(\omega^{22}:\omega^{19}:1)$ & 							  & $(1:\omega^{11}:0)$ &  $(1:\omega^{13}:0)$\\
\end{tabular}
\end{tabular}

\begin{tabular}{rl}
Secant distribution: & (the number $n_t$ of $t$-secants is present iff $t\geq 3$ and $n_t>0$)\\
& \begin{tabular}{r|ccccccc}
$t$ &
24 & 9 & 7 & 6  & 5 & 4  & 3 \\
\hline
$n_t$ &
2 & 1 & 6 & 15 & 24 & 101 & 207
\end{tabular}
\end{tabular}

\begin{tabular}{rl}
Notes: & The third roots of unity are $1,\omega^{8},\omega^{16}$.
\end{tabular}

\subsection{$q=27$}

Let $\omega$ be a primitive element of $\F_{27}$ which has minimal polynomial $x^3-x+1$ over $\F_3$.

\begin{tabular}{rl}
Points: & (points on the $X$ and $Y$ axes are not displayed)\\
& \begin{tabular}{ccccc}
$(1:\omega:1)$ & $(1:\omega^{12}:1)$ & $(2:\omega^{3}:1)$ & $(\omega^{2}:\omega^{25}:1)$ & $(\omega^{4}:\omega^{17}:1)$ \\
$(\omega:1:1)$ & $(\omega^{12}:1:1)$ & $(\omega^{3}:2:1)$ & $(\omega^{25}:\omega^{2}:1)$ & $(\omega^{17}:\omega^{4}:1)$ \\
$(\omega^{5}:\omega^{10}:1)$ & $(\omega^{6}:\omega^{24}:1)$ & $(\omega^{7}:\omega^{7}:1)$ & $(\omega^{8}:\omega^{21}:1)$ & $(\omega^{9}:\omega^{20}:1)$ \\
$(\omega^{10}:\omega^{5}:1)$ & $(\omega^{24}:\omega^{6}:1)$ & 							  & $(\omega^{21}:\omega^{8}:1)$ & $(\omega^{20}:\omega^{9}:1)$ \\
$(\omega^{11}:\omega^{18}:1)$ & $(\omega^{14}:\omega^{23}:1)$ & $(\omega^{15}:\omega^{19}:1)$ & $(\omega^{16}:\omega^{22}:1)$ & \\
$(\omega^{18}:\omega^{11}:1)$ & $(\omega^{23}:\omega^{14}:1)$ & $(\omega^{19}:\omega^{15}:1)$ & $(\omega^{22}:\omega^{16}:1)$ & \\
$(1:\omega^{2}:0)$ & $(1:\omega^{24}:0)$ & & & \\
\end{tabular}
\end{tabular}

\begin{tabular}{rl}
Secant distribution: & (the number $n_t$ of $t$-secants is present iff $t\geq 3$ and $n_t>0$)\\
& \begin{tabular}{r|cccccc}
$t$ &
26 & 7 & 6  & 5 & 4    & 3 \\
\hline
$n_t$ &
2 &  2 & 15 & 57 & 124 & 195
\end{tabular}
\end{tabular}

\begin{tabular}{rl}
Notes: & the five-secant through the origin has slope $-1$.\\
\end{tabular}

\subsection{$q=31$}

\subsubsection*{First example:}

\begin{tabular}{rl}
Points: & (points on the $X$ and $Y$ axes are not displayed)\\
& \begin{tabular}{cccccc}
$(1:1:1)$ & $(1:30:1)$ & $(2:12:1)$ & $(3:11:1)$ & $(4:6:1)$ & $(5:9:1)$ \\
          & $(30:1:1)$ & $(12:2:1)$ & $(11:3:1)$ & $(6:4:1)$ & $(9:5:1)$ \\
$(7:19:1)$ & $(8:13:1)$ & $(10:26:1)$ & $(14:23:1)$ & $(15:17:1)$ & $(16:22:1)$ \\
$(19:7:1)$ & $(13:8:1)$ & $(26:10:1)$ & $(23:14:1)$ & $(17:15:1)$ & $(22:16:1)$ \\
$(18:21:1)$ & $(20:25:1)$ & $(24:29:1)$ & $(27:28:1)$ & 		  &				\\
$(21:18:1)$ & $(25:20:1)$ & $(29:24:1)$ & $(28:27:1)$ & $(1:5:0)$ & $(1:25:0)$ \\
\end{tabular}
\end{tabular}

\begin{tabular}{rl}
Secant distribution: & (the number $n_t$ of $t$-secants is present iff $t\geq 3$ and $n_t>0$)\\
& \begin{tabular}{r|cccccccc}
$t$ &
30 & 10 & 8 & 7 &  6 &  5 &  4  &   3 \\
\hline
$n_t$ &
2  & 1 & 4  & 4 & 12 & 58 & 147 &  334
\end{tabular}
\end{tabular}

\subsubsection*{Second example:}

\begin{tabular}{rl}
Points: & (points on the $X$ and $Y$ axes are not displayed)\\
& \begin{tabular}{cccccc}
$(1:9:1)$ & $(1:24:1)$ & $(2:10:1)$ & $(3:29:1)$ & $(4:17:1)$ & $(5:6:1)$ \\
$(9:1:1)$ & $(24:1:1)$ & $(10:2:1)$ & $(29:3:1)$ & $(17:4:1)$ & $(6:5:1)$ \\
$(7:21:1)$ & $(8:16:1)$ & $(11:20:1)$ & $(12:28:1)$ & $(13:30:1)$ & $(14:25:1)$ \\
$(21:7:1)$ & $(16:8:1)$ & $(20:11:1)$ & $(28:12:1)$ & $(30:13:1)$ & $(25:14:1)$ \\
$(15:26:1)$ & $(18:22:1)$ & $(19:23:1)$ & $(27:27:1)$ & 		  &				\\
$(26:15:1)$ & $(22:18:1)$ & $(23:19:1)$ &             & $(1:6:0)$ & $(1:26:0)$ \\
\end{tabular}
\end{tabular}

\begin{tabular}{rl}
Secant distribution: & (the number $n_t$ of $t$-secants is present iff $t\geq 3$ and $n_t>0$)\\
& \begin{tabular}{r|cccccccc}
$t$ &
30 & 8 & 7 &  6 &  5 &  4  &   3 \\
\hline
$n_t$ &
2  & 2 & 2 & 24 & 58 & 153 & 304
\end{tabular}
\end{tabular}

\subsubsection*{Third example:}

\begin{tabular}{rl}
Points: & (points on the $X$ and $Y$ axes are not displayed)\\
& \begin{tabular}{cccccc}
$(1:21:1)$ & $(1:28:1)$ & $(2:30:1)$ & $(3:3:1)$ & $(4:27:1)$ & $(5:23:1)$ \\
$(21:1:1)$ & $(28:1:1)$ & $(30:2:1)$ &           & $(27:4:1)$ & $(23:5:1)$ \\
$(6:17:1)$ & $(7:11:1)$ & $(8:29:1)$ & $(9:14:1)$ & $(10:16:1)$ & $(12:22:1)$ \\
$(17:6:1)$ & $(11:7:1)$ & $(29:8:1)$ & $(14:9:1)$ & $(16:10:1)$ & $(22:12:1)$ \\
$(13:26:1)$ & $(15:25:1)$ & $(18:24:1)$ & $(19:20:1)$ & 		  &				\\
$(26:13:1)$ & $(25:15:1)$ & $(24:18:1)$ & $(20:19:1)$ & $(1:6:0)$ & $(1:26:0)$ \\
\end{tabular}
\end{tabular}

\begin{tabular}{rl}
Secant distribution: & (the number $n_t$ of $t$-secants is present iff $t\geq 3$ and $n_t>0$)\\
& \begin{tabular}{r|cccccccc}
$t$ &
30 & 7 &  6 &  5 &  4  &   3 \\
\hline
$n_t$ &
2  & 4 & 22 & 65 & 154 &  291
\end{tabular}
\end{tabular}

\begin{tabular}{rl}
Notes: & The third roots of unity are $1,5,25$.\\
\end{tabular}

\subsection{$q=37$}

\begin{tabular}{rl}
Points: & (points on the $X$ and $Y$ axes are not displayed)\\
& \begin{tabular}{cccccc}
$(1:3:1)$ & $(1:12:1)$ & $(2:2:1)$ & $(4:5:1)$ & $(6:32:1)$ & $(7:19:1)$ \\
$(3:1:1)$ & $(12:1:1)$ &         & $(5:4:1)$ & $(32:6:1)$ & $(19:7:1)$ \\
$(8:29:1)$ & $(9:25:1)$ & $(10:24:1)$ & $(11:35:1)$ & $(13:15:1)$ & $(14:16:1)$ \\
$(29:8:1)$ & $(25:9:1)$ & $(24:10:1)$ & $(35:11:1)$ & $(15:13:1)$ & $(16:14:1)$ \\
$(17:33:1)$ & $(18:28:1)$ & $(20:22:1)$ & $(21:27:1)$ & $(23:30:1)$ & $(26:34:1)$ \\
$(33:17:1)$ & $(28:18:1)$ & $(22:20:1)$ & $(27:21:1)$ & $(30:23:1)$ & $(34:26:1)$ \\
$(31:36:1)$ & $(36:31:1)$ &             &             & $(1:10:0)$  & $(1:26:0)$ \\
\end{tabular}
\end{tabular}

\begin{tabular}{rl}
Secant distribution: & (the number $n_t$ of $t$-secants is present iff $t\geq 3$ and $n_t>0$)\\
& \begin{tabular}{r|ccccccc}
$t$ &
36 & 8 & 7 &  6 &  5 &  4  &   3 \\
\hline
$n_t$ &
2  & 3 & 6 & 27 & 79 & 230 &  445
\end{tabular}
\end{tabular}

\begin{tabular}{rl}
Notes: & The third roots of unity are $1,10,26$.\\
\end{tabular}

\subsection{$q=43$}

\begin{tabular}{rl}
Points: & (points on the $X$ and $Y$ axes are not displayed)\\
& \begin{tabular}{cccccc}
$(1:18:1)$ & $(1:31:1)$ & $(2:39:1)$ & $(3:40:1)$ & $(4:36:1)$ & $(5:28:1)$ \\
$(18:1:1)$ & $(31:1:1)$ & $(39:2:1)$ & $(40:3:1)$ & $(36:4:1)$ & $(28:5:1)$ \\
$(6:27:1)$ & $(7:16:1)$ & $(8:32:1)$ & $(9:15:1)$ & $(10:26:1)$ & $(11:33:1)$ \\
$(27:6:1)$ & $(16:7:1)$ & $(32:8:1)$ & $(15:9:1)$ & $(26:10:1)$ & $(33:11:1)$ \\
$(12:21:1)$ & $(13:23:1)$ & $(14:22:1)$ & $(17:34:1)$ & $(19:19:1)$ & $(20:38:1)$ \\
$(21:12:1)$ & $(23:13:1)$ & $(22:14:1)$ & $(34:17:1)$ &             & $(38:20:1)$ \\
$(24:25:1)$ & $(29:41:1)$ & $(30:37:1)$ & $(35:42:1)$ &  &  \\
$(25:24:1)$ & $(41:29:1)$ & $(37:30:1)$ & $(42:35:1)$ & $(1:6:0)$ & $(1:36:0)$\\
\end{tabular}
\end{tabular}

\begin{tabular}{rl}
Secant distribution: & (the number $n_t$ of $t$-secants is present iff $t\geq 3$ and $n_t>0$)\\
& \begin{tabular}{r|ccccccc}
$t$ &
42 & 8 & 7 &  6 &  5  &  4  &   3 \\
\hline
$n_t$ &
2  & 4 & 8 & 26 & 122 & 321 &  590
\end{tabular}
\end{tabular}

\begin{tabular}{rl}
Notes: & The third roots of unity are $1,6,36$.\\
\end{tabular}

\section*{Acknowledgement}

We are thankful to Aart Blokhuis for his idea that helped finding the first example for $q=31$.

\end{document}